\documentclass[11pt]{article}

\RequirePackage[OT1]{fontenc}
\RequirePackage{amsthm,amsmath,amssymb,amsfonts,graphicx}
\RequirePackage[colorlinks]{hyperref}
\RequirePackage{hypernat}


\numberwithin{equation}{section}
\theoremstyle{plain}
\newtheorem{thm}{Theorem}[section]

\newtheorem{prop}[thm]{Proposition}
\newtheorem{cor}[thm]{Corollary}
\newtheorem{lem}[thm]{Lemma}
\newtheorem{defn}[thm]{Definition}

\newcommand{\blah}[1]{}

\newcommand{\bea}{\begin{eqnarray}}
\newcommand{\eea}{\end{eqnarray}}
\newcommand{\bean}{\begin{eqnarray*}}
\newcommand{\eean}{\end{eqnarray*}}
\def\be{\begin{eqnarray}}
\def\ee{\end{eqnarray}}
\def\ben{\begin{eqnarray*}}
\def\een{\end{eqnarray*}}

\newcommand{\Sumn}{\sum_{i=1}^{n}}
\newcommand{\ra} {\rightarrow}

\newcommand{\suminfj}{\sum_{j=1}^{\infty}}

\newcommand{\Suminfs}{\sum_{s=0}^{\infty}}

\newcommand{\Zpl}{\mathbb{Z}_{+}}
\newcommand{\NN}{{\mathbb N}}
\newcommand{\IND}{{\mathbb I}}
\newcommand{\RL}{{\mathbb R}}

\newcommand{\Bern}{\mbox{\rm Bern}}

\newcommand{\stv}{{\mbox{\rm\scriptsize TV}}}

\newcommand{\dou}{\partial}
\newcommand{\half}{{\textstyle \frac{1}{2}}}

\newcommand{\lam}{\lambda}

\newcommand{\vc}[1]{{\mathbf #1}}

\newcommand{\Cpo}{\mbox{\rm CPo}(\lambda,Q)}
\newcommand{\Pol}{\mbox{\rm Po}(\lam)}

\def\l{\lambda}
\def\Bl{\left(}
\def\Br{\right)}
\def\th{\theta}
\def\sth{{\sqrt{\th}}}
\def\n{\nu}
\def\a{\alpha}

\hfuzz12pt

\setlength{\oddsidemargin}{0.5cm}
\setlength{\evensidemargin}{0.5cm}
\setlength{\topmargin}{-.25cm}  
\setlength{\textheight}{20.5cm}
\setlength{\textwidth}{15cm}

\begin{document}

\title{Compound Poisson Approximation\\
via Information Functionals\protect\thanks{Portions 
of this paper are based on the Ph.D.\
dissertation \cite{Mad05:phd} of M.\ Madiman, advised by I.\ Kontoyiannis,
at the Division of Applied Mathematics, Brown University.}
}

\author
{
	A.D. Barbour
	\thanks{
	Angewandte Mathematik,
	Universit\"at Z\"urich--Irchel,
	Winterthurerstrasse 190,
	CH--8057 Z\"urich, Switzerland.
	Email: \texttt{a.d.barbour@math.unizh.ch}
	}
\and
	O. Johnson
	\thanks{
	Department of Mathematics,
	University of Bristol,
	University Walk, Bristol, BS8 1TW, UK.
	Email: \texttt{
	O.Johnson@bristol.ac.uk
		}
	}
\and 
	I. Kontoyiannis
	\thanks{
	Department of Informatics,
	Athens University of Economics \& Business,
	Patission 76, Athens 10434, Greece.
	Email: \texttt{
	yiannis@aueb.gr
		}.
	I.K.\ was supported in part
	by a Marie Curie International
	Outgoing Fellowship, PIOF-GA-2009-235837.
	}
\and
	M. Madiman
	\thanks{
	Department of Statistics,
	Yale University,
	24 Hillhouse Avenue,
	New Haven CT 06511, USA.
	Email: \texttt{
	mokshay.madiman@yale.edu
		}
	}
}

\maketitle

\begin{abstract}
An information-theoretic development is given
for the problem of compound Poisson approximation,
which parallels earlier treatments for Gaussian
and Poisson approximation.
Let $P_{S_n}$ be the distribution of a sum
$S_n=\Sumn Y_i$ of independent integer-valued
random variables $Y_i$. Nonasymptotic bounds
are derived for the distance between $P_{S_n}$
and an appropriately chosen compound Poisson law.
In the case where all $Y_i$ have the same
conditional distribution given $\{Y_i\neq 0\}$,
a bound on the relative entropy distance between
$P_{S_n}$ and the compound Poisson distribution
is derived, based on the data-processing property
of relative entropy and earlier Poisson approximation
results. When the $Y_i$ have arbitrary distributions,
corresponding bounds are derived in terms of the total
variation distance. The main technical ingredient 
is the introduction of two ``information 
functionals,'' and the analysis of their properties.
These information functionals play a role analogous 
to that of the classical Fisher information 
in normal approximation.
Detailed comparisons are made
between the resulting inequalities and 
related bounds.
\end{abstract}

{\bf AMS classification ---}
60E15,   	
60E07,   	
60F05,    	
94A17    	

\bigskip

{\bf Keywords ---} Compound Poisson approximation,
Fisher information, information theory,
relative entropy, Stein's method





\section{Introduction and main results}
\label{sec:intro}
The study of the distribution of a sum
$S_n=\Sumn Y_i$ 
of weakly dependent random
variables $Y_i$ is an important part of
probability theory, with numerous classical
and modern applications. 
This work provides an information-theoretic
treatment of the problem of approximating the 
distribution of $S_n$ by a compound Poisson law,
when the $Y_i$ are discrete, independent random 
variables. Before describing the present approach,
some of the relevant
background is briefly reviewed.

\subsection{Normal approximation and entropy}
\label{sec:normal}
When $Y_1,Y_2,\ldots,Y_n$ are independent
and identically distributed (i.i.d.)
random variables with mean zero
and variance $\sigma^2<\infty$, the central limit
theorem (CLT) and its various refinements state
that the distribution of $T_n:=(1/\sqrt{n})\Sumn Y_i$ 
is close to the $N(0,\sigma^2)$ distribution for 
large $n$.
In recent years the CLT
has been examined from an information-theoretic
point of view and, among various results, 
it has been shown that, if the $Y_i$ have a 
density with respect to Lebesgue measure, 
then the density $f_{T_n}$ of the normalized
sum $T_n$ converges {\em monotonically} to the 
normal density with mean zero and variance $\sigma^2$;
that is, the entropy 
$h(f_{T_n}):=-\int f_{T_n}\log f_{T_n}$ 
of $f_{T_n}$ {\em increases} to the $N(0,\sigma^2)$
entropy as $n\to\infty$, 
which is {\em maximal} among all
random variables with fixed variance $\sigma^2$.
[Throughout, `$\log$' denotes the natural logarithm.]

Apart from this intuitively appealing result,
information-theoretic ideas and techniques have
also provided nonasymptotic inequalities,
for example giving accurate bounds on the relative
entropy $D(f_{T_n}\|\phi):=\int f_{T_n}\log(f_{T_n}/\phi)$
between the density of $T_n$ and the limiting 
normal density $\phi$. Details can be found in
\cite{Bar86,JB04,Joh04:book,ABBN04:1,ABBN04:2,TV06,MB07}
and the references in these works. 

The gist of the information-theoretic approach is
based on estimates of the Fisher information, 
which acts as a ``local'' version of the relative entropy. 
For a random variable $Y$ with a differentiable 
density $f$ and variance $\sigma^2<\infty$, 
the {\em (standardized) Fisher information} is defined as,
\ben
J_{N}(Y):= E \bigg[  \frac{\dou}{\dou y} 
\log f(Y)- \frac{\dou}{\dou y} \log \phi(Y) \bigg]^{2} ,
\een
where $\phi$ is the $N(0,\sigma^2)$ density.
The functional $J_N$ satisfies the
following properties:
\begin{enumerate}
\item[(A)] $J_N(Y)$ is the variance of 
the (standardized) score function,
$r_Y(y):= \frac{\dou}{\dou y} \log f(y)- \frac{\dou}{\dou y} \log \phi(y)$,
$y\in\RL$.
\item[(B)] $J_{N}(Y)=0$ if and only if $Y$ is Gaussian.
\item[(C)] $J_{N}$ satisfies a subadditivity property for sums.
\item[(D)] If $J_{N}(Y)$ is small then the density $f$ 
of $Y$ is approximately normal and, in particular,
$D(f\| \phi)$ is also appropriately small.
\end{enumerate}
Roughly speaking, the information-theoretic approach
to the CLT and associated normal approximation bounds 
consists of two steps;
first a strong version of Property~(C) is used
to show that $J_{N}(T_n)$ is close to zero for 
large~$n$, and then Property~(D) is applied to obtain 
precise bounds on the relative 
entropy $D(f_{T_n}\|\phi)$.

\subsection{Poisson approximation}
More recently, an analogous program was 
carried out for Poisson approximation.
The Poisson law was identified
as having maximum entropy within a natural
class of discrete distributions on~$\Zpl:=\{0,1,2,\ldots\}$
\cite{Har01,Top02,Joh07}, and Poisson
approximation bounds in terms of relative
entropy were developed in \cite{KHJ05};
see also \cite{JM87} for earlier related results.
The approach of \cite{KHJ05} follows a similar
outline to the one described above for
normal approximation. Specifically, 
for a random variable
$Y$ with values in $\Zpl$
and distribution $P$, the 
{\em scaled Fisher information of Y}
was defined as,
\be
J_{\pi}(Y) :=\lam E[\rho_Y(Y)^2]=\lambda{\rm Var}(\rho_Y(Y)),
\label{eq:scaledF}
\ee
where $\lambda$ is the mean of $Y$ and
the scaled score function $\rho_Y$ is given by,
\be
\rho_Y(y) 
:= \frac{ (y + 1) P(y + 1) }{\lam P(y)} -1,
\;\;\;\;y\geq 0.
\label{eq:scaledS}
\ee
[Throughout, we use the term `distribution'
to refer to the discrete probability mass
function of an integer-valued random variable.]

As discussed briefly before the proof 
of Theorem~\ref{thm:1} in Section~\ref{ss:simple}
the functional $J_{\pi}(Y)$ was shown in
\cite{KHJ05} to satisfy Properties~(A-D) exactly 
analogous to those of the Fisher
information described above, with the Poisson law
playing the role of the Gaussian distribution.
These properties were employed 
to establish optimal or near-optimal Poisson 
approximation bounds for the distribution of 
sums of nonnegative integer-valued random variables
\cite{KHJ05}.

\subsection{Compound Poisson approximation}
This work provides a parallel treatment 
for the more general -- and technically significantly
more difficult -- problem of approximating
the distribution $P_{S_n}$ of a sum
$S_n=\Sumn Y_i$ of independent
$\Zpl$-valued random variables by an appropriate
compound Poisson law. 
This and related questions
arise naturally in applications involving 
counting; see, e.g., 
\cite{BHJ92:book,Ald89:book,barbour:monograph,diaconis:stein}.
As we will see, in this setting 
the information-theoretic approach
not only gives an elegant alternative 
route to the classical asymptotic results
(as was the case in the first information-theoretic 
treatments of the CLT), 
but it actually yields fairly sharp
finite-$n$ inequalities that are competitive
with some of the best existing bounds.

Given a distribution $Q$ on 
$\NN=\{1,2,\ldots\}$ and a $\lambda>0$,
recall that the compound Poisson law
$\Cpo$ is defined as the distribution
of the random sum $\sum_{i=1}^Z X_i$,
where $Z\sim\Pol$ is Poisson distributed
with parameter $\lambda$ and the  
$X_i$ are i.i.d.\ with distribution $Q$,
independent of $Z$.

Relevant results that can be seen as the
intellectual background to the 
information-theoretic approach for
compound Poisson approximation were 
recently established 
in \cite{JKM09:pre,YYu:pre},
where it was shown that, like the Gaussian 
and the Poisson, the compound Poisson law 
has a maximum entropy property within a natural 
class of probability measures on $\Zpl$. 
Here we provide nonasymptotic,
computable and accurate bounds for the
distance between $P_{S_n}$ and an appropriately
chosen compound Poisson law, partly based on
extensions of the 
information-theoretic techniques 
introduced in \cite{KHJ05} and \cite{JM87}
for Poisson approximation.

In order to state our main results we
need to introduce some more terminology.
When considering the distribution of $S_n = \Sumn Y_i$,
we find it convenient 
to write each $Y_i$ as the product $B_i X_i$ of two
independent random variables, where $B_{i}$ takes 
values in $\{0,1\}$ and $X_i$ takes values
in $\NN$. This is done uniquely 
and without loss of generality, by taking 
$B_i$ to be $\Bern(p_i)$ with 
$p_i=\Pr\{Y_i\neq 0\}$,
and $X_i$ having distribution 
$Q_i$ on $\NN$,
where $Q_i(k)=\Pr\{Y_i=k\,|\,Y_i\geq 1\}=\Pr\{Y_i=k\}/p_i$,
for $k\geq 1$. 

In the special case of a sum 
$S_n=\Sumn Y_i$ of random variables $Y_i=B_iX_i$ 
where all the $X_i$ have the same distribution $Q$, 
it turns out that the problem of approximating 
$P_{S_n}$ by a compound Poisson law can
be reduced to a Poisson approximation
inequality. This is achieved by an application
of the so-called ``data-processing'' property of
the relative entropy, which then facilitates the use
of a Poisson approximation bound established
in \cite{KHJ05}. The result is stated in 
Theorem~\ref{thm:1} below;
its proof is 
given in Section~\ref{ss:simple}.

\begin{thm}
\label{thm:1}
Consider a sum $S_n = \Sumn Y_i$ 
of independent random variables $Y_i=B_iX_i$,
where the $X_i$ are i.i.d.\ $\sim Q$
and the $B_i$ are independent $\Bern(p_i)$. Then 
the relative entropy between the distribution $P_{S_n}$
of $S_n$ and the $\Cpo$ distribution 
satisfies,
\ben
D(P_{S_n} \| \Cpo) \leq \frac{1}{\lam} \sum_{i=1}^n \frac{p_i^3}{1-p_i},
\een
where $\lam := \sum_{i=1}^n p_i$.
\end{thm}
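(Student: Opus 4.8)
The plan is to reduce the compound Poisson bound to the Poisson approximation bound for sums of independent Bernoulli random variables proved in \cite{KHJ05}, using the data-processing property of relative entropy. The point is that, because the $X_i$ are i.i.d.\ with a \emph{common} law $Q$, both $P_{S_n}$ and $\Cpo$ are obtained from two distributions on $\Zpl$ by passing them through the \emph{same} Markov kernel.

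First I would set $W_n := \Sumn B_i$ and observe that $S_n$ is a random sum of i.i.d.\ copies of $Q$: conditioning on $(B_1,\dots,B_n)$, if exactly $m$ of the $B_i$ equal $1$ then $S_n$ is a sum of $m$ i.i.d.\ $Q$-distributed terms, so its conditional law is the $m$-fold convolution $Q^{*m}$ (with $Q^{*0}:=\delta_0$), depending on $(B_1,\dots,B_n)$ only through $m=W_n$. Hence
\[
P_{S_n}(\cdot) \;=\; \sum_{m\geq 0}\pr\{W_n=m\}\,Q^{*m}(\cdot),
\qquad\text{while}\qquad
\Cpo(\cdot)\;=\;\sum_{m\geq 0}\pr\{Z=m\}\,Q^{*m}(\cdot),\quad Z\sim\Pol,
\]
the latter being just the definition of the compound Poisson law. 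Thus $P_{S_n}=P_{W_n}K$ and $\Cpo=\Pol\,K$ for the common kernel $K(m,\cdot):=Q^{*m}(\cdot)$.

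Second, I would apply the data-processing inequality for relative entropy to the kernel $K$, which gives
\[
D(P_{S_n}\,\|\,\Cpo)\;=\;D(P_{W_n}K\,\|\,\Pol\,K)\;\leq\;D(P_{W_n}\,\|\,\Pol).
\]
Finally, since $W_n=\Sumn B_i$ is a sum of independent $\Bern(p_i)$ random variables with $\ep W_n=\lam=\Sumn p_i$, I would invoke the Poisson approximation bound of \cite{KHJ05} for such Bernoulli sums, which states precisely that $D(P_{W_n}\,\|\,\Pol)\leq\frac{1}{\lam}\Sumn\frac{p_i^3}{1-p_i}$; combining the last two displays then yields the theorem.

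The only genuine obstacle is making the first step airtight: one must verify carefully that the convolution-power kernel $K$ is literally identical for the two measures, so that data processing applies. This is exactly where the hypothesis of a common $Q$ is used — if the $X_i$ had distinct distributions $Q_i$, neither $P_{S_n}$ nor $\Cpo$ would be a pushforward of $P_{W_n}$ (resp.\ $\Pol$) through a single channel, and the argument would collapse, which is why the general case treated later in the paper requires the total-variation machinery rather than relative entropy. Everything else is routine, the substantive content being imported wholesale from the Bernoulli-sum estimate of \cite{KHJ05}.
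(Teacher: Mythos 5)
Your proposal is correct and follows essentially the same route as the paper: reduce to the Bernoulli-sum count via the data-processing property of relative entropy and then invoke the Poisson approximation bound of \cite{KHJ05}. The only cosmetic difference is that you phrase the reduction through the Markov kernel $K(m,\cdot)=Q^{*m}(\cdot)$, whereas the paper writes both sums as the same deterministic function of $(\vc{X},\mbox{count})$ and combines data processing with the chain rule; the two formulations are equivalent.
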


Recall that, for distributions $P$ and $Q$ on $\Zpl$, 
the {\em relative entropy}, or {\em Kullback-Leibler 
divergence}, $D(P\|Q)$,
is defined by,
\ben
D(P\|Q):=\sum_{x\in \Zpl}P(x)\log\Big[\frac{P(x)}{Q(x)}\Big].
\een
Although not a metric, relative entropy
is an important measure of closeness between
probability distributions \cite{CT91:book}\cite{CK81:book}
and it can be used to obtain total variation bounds 
via Pinsker's inequality \cite{CK81:book},
\ben
d_\stv(P,Q)^2\leq \half D(P\|Q),
\een
where, as usual,
the total variation distance is
\ben 
d_\stv(P,Q) := \half\sum_{x\in \Zpl} \big|P(x)-Q(x)\big| 
= \max_{A\subset \Zpl} \big|P(A)-Q(A)\big| .
\een

In the general case where the distributions
$Q_i$ corresponding to the $X_i$ in the 
summands $Y_i=B_i X_i$ are not identical,
the data-processing argument used in the proof 
of Theorem~\ref{thm:1} can no longer 
be applied. Instead, the key idea 
in this work is the introduction of two 
``information functionals,'' or simply 
``informations,'' which, in the present context,
play a role analogous
to that of the Fisher information $J_N$ 
and the scaled 
Fisher information $J_{\pi}$ in Gaussian 
and Poisson approximation, respectively.

In Section~\ref{ss:lidefn}
we will define two such 
information functionals, 
$J_{\vc{Q},1}$ and $J_{Q,2}$,
and use them to derive compound Poisson
approximation bounds. Both 
$J_{\vc{Q},1}$ and $J_{Q,2}$
will be seen to satisfy natural analogs 
of Properties~(A-D) stated above, except 
that only a weaker version 
of Property~(D) will be established: When
either 
$J_{\vc{Q},1}(Y)$ or $J_{Q,2}(Y)$
is close to zero, the distribution 
of $Y$ is close to a compound Poisson law 
in the sense of total variation rather than
relative entropy. As in normal and Poisson 
approximation, combining the analogs
of Properties~(C) and~(D) 
satisfied by the two new information functionals,
yields new compound Poisson 
approximation bounds.

\begin{thm}
\label{thm:2}
Consider a sum $S_n = \Sumn Y_i$ 
of independent random variables $Y_i=B_iX_i$,
where each $X_i$ has distribution $Q_i$ on $\NN$
with mean $q_i$, and each $B_i\sim\Bern(p_i)$.
Let $\lam = \sum_{i=1}^n p_i$
and $Q=\sum_{i=1}^n\frac{p_i}{\lambda}Q_i$.
Then,
\ben
d_\stv(P_{S_n}, \Cpo )\leq 
H(\lambda,Q) q \left\{ \left[ \sum_{i=1}^n \frac{p_i^3}{1-p_i} 
\right]^{1/2} + D( \vc{Q}) \right\},
\een
where $P_{S_n}$ is the distribution of $S_n$,
$q=\sum_{i=1}^n\frac{p_i}{\lambda}q_i$,
$H(\lambda,Q)$ denotes the Stein factor defined
in {\em (\ref{eq:steinF})} below,
and $D(\vc{Q})$ is a 
measure of the dissimilarity of the 
distributions $\vc{Q}=(Q_i)$, which 
vanishes when the $Q_i$ are identical:
\be
D(\vc{Q}) := \sum_{j=1}^\infty \sum_{i=1}^n \frac{jp_i }{q}
\, |Q_i(j)-Q(j)|\,.
\label{eq:DQ}
\ee
\end{thm}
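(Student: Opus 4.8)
The plan is to mimic, in the compound Poisson setting, the two-step scheme used for normal and Poisson approximation: a ``global'' step using subadditivity (the analog of Property~(C)) to control one of the information functionals for $S_n$ in terms of the contributions of the individual summands, and a ``local'' step using a Stein-type argument (the analog of Property~(D)) to convert smallness of an information functional into closeness to $\Cpo$ in total variation, with the Stein factor $H(\lambda,Q)$ supplying the constant. Since only the weaker form of Property~(D) is available, the conclusion is in total variation rather than relative entropy.

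Concretely, first I would set up the compound Poisson Stein equation attached to the Panjer-type recursion $k\,W(k)=\lambda\sum_{j\ge 1}jQ(j)\,W(k-j)$ for $W=\Cpo$: for a test set $A\subseteq\Zpl$ there is a solution $g=g_A$ of $\lambda\sum_j jQ(j)g(w+j)-w\,g(w)=\mathbf{1}_A(w)-W(A)$, and the relevant difference norm of $g$ is bounded by $H(\lambda,Q)$ (this is the Stein factor estimate, quoted from the Stein's-method literature). Evaluating the left-hand side in expectation under $P_{S_n}$ and rearranging identifies $d_\stv(P_{S_n},\Cpo)$ with $H(\lambda,Q)$ times the expectation of (a multiple of) the scaled score function that defines the information functional; a Cauchy--Schwarz step then replaces $\ep[\text{score}]$ by $\sqrt{\ep[\text{score}^2]}$, i.e.\ by a multiple of $\sqrt{J_{Q,2}(S_n)}$. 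This produces the square root in the statement, while the factor $q$ is picked up from the normalisation of the score.

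Next I would estimate $J_{Q,2}(S_n)$. Because the summands $Y_i=B_iX_i$ have different conditional laws $Q_i$ whereas $J_{Q,2}$ is built from the pooled $Q$, I would first pass to the functional $J_{\vc{Q},1}$ adapted to the family $\vc{Q}=(Q_i)$: the gap between the two scores is governed by the differences $Q_i(j)-Q(j)$, and bounding it gives, after the Cauchy--Schwarz and normalisation bookkeeping, exactly the dissimilarity term $D(\vc{Q})=\sum_j\sum_i (jp_i/q)|Q_i(j)-Q(j)|$. The functional $J_{\vc{Q},1}$ is subadditive along independent summands (Property~(C)), so $J_{\vc{Q},1}(S_n)$ is bounded by a sum of single-summand contributions; each summand $Y_i=B_iX_i$ has an explicit scaled score whose second moment is bounded by a constant multiple of $p_i^3/(1-p_i)$ — the same single-summand estimate that underlies Theorem~\ref{thm:1}. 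Summing over $i$ yields the term $\big[\sum_i p_i^3/(1-p_i)\big]^{1/2}$, and adding in the dissimilarity correction gives the bracketed quantity in the statement.

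The step I expect to be the main obstacle is the Stein/Property-(D) part: one must construct the solution of the compound Poisson Stein equation and show that its relevant norm obeys a bound of the right form, so that $H(\lambda,Q)$ really does control $d_\stv$ in terms of $\sqrt{J_{Q,2}}$ — compound Poisson Stein factors are delicate and, unlike in the Poisson case, are not uniformly well behaved for all $Q$. A secondary difficulty is making the passage from $J_{\vc{Q},1}$ to $J_{Q,2}$ quantitatively clean enough that the error collapses to the single term $D(\vc{Q})$ rather than a messier double sum; this requires choosing the two score functions so that their difference telescopes appropriately.
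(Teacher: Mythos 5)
Your overall architecture (subadditivity of an information functional plus a Stein-factor bound converting the functional into total variation) is the right one, and your ``global'' step is exactly the paper's: subadditivity of $J_{\vc{Q},1}$ together with the single-summand computation $J_{Q_i,1}(B_iX_i)=p_i^2/(1-p_i)$ gives $\lambda J_{\vc{Q},1}(S_n)\le\sum_i p_i^3/(1-p_i)$ (Corollary~\ref{cor:j1simple}). The gap is in your ``local'' step. You route it through the Katti--Panjer functional: first $d_\stv(P_{S_n},\Cpo)\le H(\lambda,Q)\sqrt{J_{Q,2}(S_n)}$ (which is Proposition~\ref{prop:jloc}, used for Theorem~\ref{thm:3}), and then you propose to bound $\sqrt{J_{Q,2}(S_n)}$ by $q\bigl(\sqrt{\lambda J_{\vc{Q},1}(S_n)}+D(\vc{Q})\bigr)$. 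This fails for two reasons. First, $J_{Q,2}(S_n)$ is finite only when $P_{S_n}$ has full support on $\Zpl$; Theorem~\ref{thm:2} makes no such assumption (it is imposed only in Theorem~\ref{thm:3}), so your first inequality is vacuous whenever some $P_{S_n}(y)=0$ (e.g.\ all $Q_i$ concentrated on even integers). Second, even with full support, the natural decomposition $r_2(s)P_{S_n}(s)=-\lambda\sum_j jQ(j)P_{S_n}(s-j)r_1(s-j)+\sum_{i,j}p_i j\,(Q(j)-Q_i(j))\,P^{(i)}(s-j)$ forces you, after taking the $L^2(P_{S_n})$ norm, to control quantities involving the likelihood ratios $P^{(i)}(\cdot)/P_{S_n}(\cdot)$ and $P_{S_n}(\cdot-j)/P_{S_n}(\cdot)$, which are not bounded in general; the error does not collapse to the deterministic $L^1$ quantity $D(\vc{Q})$ by ``bookkeeping.'' You yourself flag this passage as the weak point, and it is: it does not go through.

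The paper's proof (Proposition~\ref{prop:sb-stein}) avoids forming $J_{Q,2}(S_n)$ altogether. Starting from the Stein discrepancy $E\{\lambda\sum_j jQ(j)g_A(S+j)\}-E[Sg_A(S)]$, it uses $E[Sg_A(S)]=\sum_iE[Y_ig_A(S)]$ and the leave-one-out laws $P^{(i)}$ to split the discrepancy \emph{before} any Cauchy--Schwarz into (i) a term proportional to $\sum_{s,j}g_A(s+j)\,jQ(j)\,\lambda P_{S_n}(s)\,r_1(s;P_{S_n},\vc{Q})$, to which Cauchy--Schwarz is applied with respect to the finite measure $jQ(j)P_{S_n}(s)$ on pairs $(j,s)$ --- so that $g_A$ contributes $\|g_A\|_\infty\sqrt{\lambda q}\le H(\lambda,Q)\sqrt{\lambda q}$ and the score contributes $\sqrt{q\,J_{\vc{Q},1}(S_n)/\lambda}$, yielding $H(\lambda,Q)\,q\sqrt{\lambda J_{\vc{Q},1}(S_n)}$ --- and (ii) a term bounded crudely in $L^1$ by $\|g_A\|_\infty\sum_{i,j}p_i j|Q(j)-Q_i(j)|=H(\lambda,Q)\,q\,D(\vc{Q})$. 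No division by $P_{S_n}(s)$ ever occurs outside the score itself, so no support assumption is needed and the dissimilarity term comes out exactly as $D(\vc{Q})$. If you redo your local step in this order --- split first, Cauchy--Schwarz only on the $r_1$ piece, sup-norm bound on the remainder --- the rest of your argument is correct.
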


Theorem~\ref{thm:2} is an immediate
consequence of the subadditivity 
property of $J_{\vc{Q},1}$ 
established in Corollary~\ref{cor:j1simple},
combined with the total variation bound
in Proposition~\ref{prop:sb-stein}.
The latter bound states that,
when $J_{\vc{Q},1}(Y)$ is small,
the total variation distance between
the distribution of $Y$ and a compound
Poisson law is also appropriately small. 
As explained in Section~\ref{ss:tvbd},
the proof of Proposition~\ref{prop:sb-stein}
uses a basic result that comes up
in the proof of compound Poisson inequalities
via Stein's method, namely, a bound on the
sup-norm of the solution of the Stein equation.
This explains the appearance of the 
Stein factor, defined next. 
But we emphasize that, apart from this 
point of contact, the overall methodology
used in establishing the results in Theorems~\ref{thm:2}
and~\ref{thm:3} is entirely different from that
used in proving compound Poisson approximation
bounds via Stein's method.

\begin{defn}
\label{defn:SteinF}
Let $Q$ be a distribution on $\NN$. 
If $\{jQ(j)\}$ is a non-increasing sequence, set 
$\delta= [\lam \{ Q(1)-2Q(2) \}]^{-1}$ and let,
\ben
H_0(\lam, Q) = \left\{ \begin{array}{ll} 
1 & \textrm{if $\delta\geq 1$}\\ 
\sqrt{\delta}(2-\sqrt{\delta}) & \textrm{if $\delta<1$.}
\end{array} \right. 
\een
For general $Q$ and any $\lam>0$, the 
{\em Stein factor $H(\lambda,Q)$} is defined as:
\be
\hspace{0.3in}
H(\lam,Q)= \left\{ \begin{array}{ll} 
H_0(\lam, Q), & \textrm{if $\{jQ(j)\}$ is non-increasing}\\ 
e^{\lam} \min \bigg\{ 1, \frac{1}{\lam Q(1)} \bigg\}, & \textrm{otherwise.}
\end{array} \right. 
\label{eq:steinF}
\ee
\end{defn}

Note that in the case when all the $Q_i$ are identical,
Theorem~\ref{thm:2} yields,
\be
d_\stv(P_{S_n}, \Cpo)^2\leq 
H(\lambda,Q)^2\,q^2\,
\sum_{i=1}^n \frac{p_i^3}{1-p_i},
\label{eq:ident1}
\ee
where $q$ is the common mean of the $Q_i=Q$,
whereas Theorem~\ref{thm:1} combined with 
Pinsker's inequality yields a similar,
though not generally comparable, bound,
\be
d_\stv(P_{S_n},\Cpo)^2
\leq 
\frac{1}{2\lambda}
\sum_{i=1}^n \frac{p_i^3}{1-p_i}.
\label{eq:ident2}
\ee
See Section~\ref{sec:num} for detailed
comparisons in special cases.

The third and last main result,
Theorem~\ref{thm:3}, gives an analogous
bound to that of Theorem~\ref{thm:2},
with only a single term in the right-hand-side.
It is obtained from the subadditivity 
property of the second information
functional $J_{\vc{Q},2}$,
Proposition~\ref{prop:proj2},
combined with the corresponding 
total variation bound
in Proposition~\ref{prop:jloc}.

\begin{thm}
\label{thm:3}
Consider a sum $S_n = \Sumn Y_i$ 
of independent random variables $Y_i=B_iX_i$,
where each $X_i$ has distribution $Q_i$ 
on $\NN$ with mean
$q_i$, and each $B_i\sim\Bern(p_i)$.
Assume all $Q_i$ have have full support on $\NN$, 
and let $\lam = \sum_{i=1}^n p_i$,
$Q=\sum_{i=1}^n\frac{p_i}{\lambda}Q_i$,
and $P_{S_n}$
denote the distribution of $S_n$. Then,
$$
d_\stv(P_{S_n} ,\Cpo)\leq 
H(\lambda,Q)
\left\{
\sum_{i=1}^n 
\left[
p_i^3 
\sum_y Q_i(y) y^2
\Big( \frac{ Q_i^{*2}(y)}{2 Q_i(y)} - 1 \Big)^2
\right]\right\}^{1/2},
$$
where $Q_i^{*2}$ denotes the convolution $Q_i*Q_i$
and $H(\lambda,Q)$ denotes the Stein factor
defined in {\em (\ref{eq:steinF})} above.
\end{thm}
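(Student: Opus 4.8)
The plan is to mirror the structure of Theorem~\ref{thm:2}, but working with the second information functional $J_{\vc{Q},2}$ in place of $J_{\vc{Q},1}$. The argument has two ingredients, each supplied by an earlier result. First, the total variation bound of Proposition~\ref{prop:jloc} should say that, for any $\Zpl$-valued $Y$ with the appropriate mean structure,
\[
d_\stv(P_Y,\Cpo)\leq H(\lambda,Q)\,\sqrt{J_{\vc{Q},2}(Y)},
\]
where $H(\lambda,Q)$ is the Stein factor of Definition~\ref{defn:SteinF} and $\lambda,Q$ are determined by $Y$ as in the statement. Second, the subadditivity statement of Proposition~\ref{prop:proj2} should give, for $S_n=\sum_{i=1}^n B_iX_i$ with the $Y_i=B_iX_i$ independent,
\[
J_{\vc{Q},2}(S_n)\leq \sum_{i=1}^n p_i^3 \sum_y Q_i(y)\,y^2\Bigl(\frac{Q_i^{*2}(y)}{2Q_i(y)}-1\Bigr)^2 ,
\]
i.e.\ the per-summand contribution is exactly the bracketed quantity appearing under the square root in the theorem. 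Granting these two facts, the theorem follows immediately: apply Proposition~\ref{prop:jloc} to $Y=S_n$ (with $\lambda=\sum p_i$ and $Q=\sum (p_i/\lambda)Q_i$, which is precisely the $\Cpo$ law being compared against), then bound $\sqrt{J_{\vc{Q},2}(S_n)}$ using Proposition~\ref{prop:proj2}.

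More concretely, I would write: by Proposition~\ref{prop:jloc},
\[
d_\stv(P_{S_n},\Cpo)\leq H(\lambda,Q)\,\bigl(J_{\vc{Q},2}(S_n)\bigr)^{1/2},
\]
and then substitute the subadditivity estimate from Proposition~\ref{prop:proj2} for $J_{\vc{Q},2}(S_n)$ inside the square root, which yields exactly the claimed right-hand side. The full-support hypothesis on the $Q_i$ is presumably needed so that the score function underlying $J_{\vc{Q},2}$ is well defined (no division by zero in ratios like $Q_i^{*2}(y)/Q_i(y)$), and it should simply be carried along from the hypotheses of the two propositions being invoked.

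The main obstacle is not in this final assembly — which is a one-line citation of two earlier results — but in those two results themselves, and in particular in verifying that the single-summand contribution to $J_{\vc{Q},2}$ is precisely $p_i^3\sum_y Q_i(y)y^2\bigl(Q_i^{*2}(y)/(2Q_i(y))-1\bigr)^2$. That identity requires computing $J_{\vc{Q},2}$ for a single term $Y_i=B_iX_i$ with $B_i\sim\Bern(p_i)$ and extracting the leading-order dependence on $p_i$; the $p_i^3$ scaling and the appearance of the self-convolution $Q_i^{*2}$ (reflecting that the ``error'' in one Bernoulli-thinned term is governed by how $Q_i$ compares to the distribution of a sum of two independent $Q_i$'s) are the delicate points. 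Since the theorem is stated as following from Propositions~\ref{prop:proj2} and~\ref{prop:jloc}, in the write-up I would simply invoke them in the order above and remark that the bracketed expression is the single-summand value of $J_{\vc{Q},2}$ furnished by Proposition~\ref{prop:proj2}.
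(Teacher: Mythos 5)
Your proposal is correct and is exactly the paper's route: Theorem~\ref{thm:3} is obtained by applying Proposition~\ref{prop:jloc} to $S_n$ (with $\lambda=\sum_ip_i$ and $Q=\sum_i(p_i/\lambda)Q_i$, for which the mean-ratio condition in the definition of $J_{Q,2}$ is satisfied) and then invoking the subadditivity $J_{Q,2}(S_n)\le\sum_{i=1}^nJ_{Q_i,2}(Y_i)$ of Proposition~\ref{prop:proj2}. The only point to correct is that the bracketed single-summand quantity is neither ``furnished by Proposition~\ref{prop:proj2}'' (which gives only the subadditivity) nor merely a leading-order expression in $p_i$: it is the exact value of $J_{Q_i,2}(B_iX_i)$, obtained by a short direct computation --- since $P_i(0)=1-p_i$ and $P_i(y)=p_iQ_i(y)$ for $y\ge1$, and since $\sum_{j=1}^{y-1}jQ_i(j)Q_i(y-j)=\tfrac{y}{2}Q_i^{*2}(y)$ by the symmetry $j\leftrightarrow y-j$, one finds $r_2(0;P_i,Q_i)=0$ and $r_2(y;P_i,Q_i)=p_iy\bigl(Q_i^{*2}(y)/(2Q_i(y))-1\bigr)$ for $y\ge1$, whence $J_{Q_i,2}(Y_i)=p_i^3\sum_yQ_i(y)y^2\bigl(Q_i^{*2}(y)/(2Q_i(y))-1\bigr)^2$ exactly.
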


The accuracy of the bounds in the three
theorems above is examined in specific 
examples in Section \ref{sec:num}, where
the resulting estimates are compared with 
what are probably the sharpest known bounds 
for compound Poisson approximation.
Although the main conclusion of these comparisons --
namely, that in broad terms our bounds are
competitive with some of the best existing bounds and,
in certain cases, may even be the sharpest --
is certainly encouraging, we wish to emphasize
that the main objective of this work is the development
of an elegant conceptual framework for compound 
Poisson limit theorems via information-theoretic
ideas, akin to the remarkable information-theoretic 
framework that has emerged for the central limit theorem
and Poisson approximation. 

\newpage

The rest of the paper is organized as follows. 
Section~\ref{ss:simple} contains basic facts,
definitions and notation that will remain in 
effect throughout. It also contains a brief review 
of earlier Poisson approximation results in terms
of relative entropy, and the proof of Theorem~\ref{thm:1}.
Section~\ref{ss:lidefn} introduces the two 
new
information functionals:
The {\em size-biased information} $J_{\vc{Q},1}$,
generalizing the scaled Fisher information 
of \cite{KHJ05}, and the {\em Katti-Panjer
information} $J_{Q,2}$, generalizing a related 
functional introduced by Johnstone and 
MacGibbon in \cite{JM87}. 
It is shown that, in each case,
Properties~(A) and~(B) analogous to those
stated in Section~\ref{sec:normal} for Fisher 
information hold for 
$J_{\vc{Q},1}$ and $J_{Q,2}$.
In Section~\ref{ss:subadd} we consider Property~(C)
and show that both $J_{\vc{Q},1}$ and $J_{Q,2}$ satisfy
natural subadditivity properties on convolution.
Section~\ref{ss:tvbd} contains bounds analogous
to that Property~(D) above, showing that
both $J_{\vc{Q},1}(Y)$ and $J_{Q,2}(Y)$ 
dominate the total variation distance between
the distribution of $Y$ and a compound Poisson law.


\section{Size-biasing, compounding and relative entropy}
\label{ss:simple}

In this section we collect preliminary definitions
and notation that will be used in subsequent sections,
and we provide the proof of Theorem~\ref{thm:1}.

The compounding operation in the definition of the 
compound Poisson law in the Introduction can be more
generally phrased as follows.
[Throughout, the empty sum $\sum_{i=1}^0 [\ldots]$
is taken to be equal to zero]. 
\begin{defn}\label{defn:cpding}
For any $\Zpl$-valued random variable $Y\sim R$
and any distribution $Q$ on $\NN$,
the {\em compound distribution $C_{Q}R$} is
that of the sum,
\ben
\sum_{i=1}^{Y} X_i ,
\een
where the $X_i$
are i.i.d.\
with common distribution $Q$, independent of $Y$.
\end{defn}

For example, the compound Poisson law $\Cpo$ 
is simply $C_Q\Pol$, and the {\em compound binomial
distribution} $C_Q$Bin($n,p$) is that of
the sum $S_n=\Sumn B_iX_i$ where the $B_i$ are i.i.d.\
Bern$(p)$ and the $X_i$ are i.i.d.\ with distribution
$Q$, independent of the $B_i$.
More generally, if the $B_i$ are Bernoulli
with different parameters $p_i$, we say that
$S_n$ is a {\em compound Bernoulli sum}
since the distribution of each summand $B_iX_i$
is $C_{Q}\Bern(p_i)$.

Next we recall the size-biasing operation, which is
intimately related to the Poisson law.
For any distribution $P$ on $\Zpl$ with mean $\lambda$,
the {\em (reduced) size-biased distribution $P^{\#}$} is,
$$P^{\#}(y)=\frac{(y+1)P(y+1)}{\lam},
\;\;\;\;y\geq 0.$$
Recalling that a distribution $P$ on $\Zpl$
satisfies the recursion,
\be
(k+1)P(k+1):=\lambda P(k),
\;\;\;\;k\in \Zpl,
\label{eq:PoissonR}
\ee
if and only if $P=\Pol$, it is immediate that
$P=\Pol$ if and only if $P=P^{\#}$. 
This also explains, in part, the definition
(\ref{eq:scaledF}) of the scaled Fisher information
in \cite{KHJ05}. Similarly, the {\em Katti-Panjer recursion}
states that $P$ is the $\Cpo$ law if and only if,
\be
\label{eq:panjer}
k P(k) = \lam \sum_{j=1}^{k} j Q(j) P (k-j),
\;\;\;\; k\in \Zpl;
\ee
see the discussion in \cite{JKM09:pre} 
for historical remarks on the origin of (\ref{eq:panjer}).

Before giving the proof of Theorem~\ref{thm:1} we
recall two results related to Poisson approximation bounds
from \cite{KHJ05}. First, for any random
variable $X\sim P$ on $\Zpl$ with mean $\lambda$,
a modified log-Sobolev inequality of \cite{BL98}
was used in \cite[Proposition~2]{KHJ05}
to show that,
\be
D(P\|\Pol)\leq J_\pi(X),
\label{eq:logS}
\ee
as long as $P$ has either full support
or finite support. Combining this with the
subadditivity property of $J_\pi$ and
elementary computations, yields 
\cite[Theorem~1]{KHJ05} 
that states:
If $T_n$ is the sum of $n$ independent 
$B_i\sim\Bern(p_i)$ random variables, 
then,
\be
D(P_{T_n}\|\Pol)\leq\frac{1}{\lambda}\sum_{i=1}^n\frac{p_i^3}{1-p_i},
\label{poi-bound}
\ee
where $P_{T_n}$ denotes the distribution of $T_n$
and $\lambda=\sum_{i=1}^np_i$.

\begin{proof}[Proof of Theorem~\ref{thm:1}]
Let $Z_n \sim\Pol$ and $T_{n}=\sum_{i=1}^{n} B_{i}$.
Then the distribution of $S_n$ is also that of the sum
$\sum_{i=1}^{T_n} X_i$; similarly,
the $\Cpo$ law is the distribution of
the sum $Z=\sum_{i=1}^{Z_n} X_i$.
Thus, writing $\vc{X}=(X_i)$, we can express
 $S_n = f(\vc{X},T_n)$ and 
 $Z = f(\vc{X},Z_n)$,
where the function $f$ is the same in both places.
Applying the data-processing inequality 
and then the chain rule for relative entropy
\cite{CK81:book},
\ben
D(P_{S_n}\|\Cpo)
&\leq & D(P_{\vc{X},T_n}\|P_{\vc{X},Z_n} ) \\
&= & \Big[\sum_{i} D(P_{X_i}\|P_{X_i})\Big] + D(P_{T_n}\|P_{Z_n}) \\
&=& D(P_{T_n}\|\Pol),
\een
and the result follows from the Poisson approximation bound
\eqref{poi-bound}.
\end{proof}

\newpage

\section{Information functionals} 
\label{ss:lidefn}

This section contains the definitions 
of two new information functionals
for discrete random variables, along 
with some of their basic properties.

\subsection{Size-biased information}
\label{ss:SBLI-defn}

For the first information functional we consider, 
some knowledge of the summation structure 
of the random variables concerned is required.

\begin{defn}\label{defn:SBLI-basic}
Consider the sum
$S = \sum_{i=1}^n Y_i \sim P$
of $n$ independent $\Zpl$-valued random variables
$Y_i\sim P_i= C_{Q_i} R_i$, $i=1,2,\ldots,n$.
For each $j$, let $Y_j'\sim C_{Q_j}(R_j^\#)$ 
be independent of the $Y_i$, and let
$S^{(j)}\sim P^{(j)}$ be the same sum as $S$
but with $Y_j'$ in place of $Y_j$.

Let $q_i$ denote the mean of each $Q_i$,
$p_i = E(Y_i)/q_i$ and 
$\lam = \sum_i p_i$. Then 
the {\em size-biased information of $S$ relative to
the sequence $\vc{Q}=(Q_i)$} is,
\ben
J_{\vc{Q},1}(S):=\lambda E[r_1(S;P,\vc{Q})^2],
\een
where the score function $r_1$ is defined by,
\ben
r_{1}(s; P, \vc{Q}):= \frac{ \sum_{i} p_i P^{(i)}(s)}
{\lam P(s)} - 1,
\;\;\;\;s\in\Zpl.
\een
\end{defn}

For simplicity,
in the case of a single summand $S=Y_1\sim P_1=C_QR$
we write $r_1(\cdot;P,Q)$ and
$J_{Q,1}(Y)$ for the score and the size-biased
information of $S$, respectively.
[Note that the score function $r_1$ is only infinite
at points $x$ outside the support of $P$, which do not
affect the definition of the size-biased information
functional.]

Although at first sight the definition of 
$J_{\vc{Q},1}$ seems restricted to the case
when all the summands $Y_i$ have 
distributions of the form $C_{Q_i}R_i$,
we note that this can always be achieved
by taking $p_i=\Pr\{Y_i\geq 1\}$ and letting
$R_i\sim\Bern(p_i)$ and $Q_i(k)=\Pr\{Y_i=k|Y_i\geq 1\}$,
for $k\geq 1$, as before.

We collect below some of the basic properties 
of $J_{\vc{Q},1}$ that follow easily from
the definition.
\begin{enumerate}
\item{Since $E[r_1(S; P, \vc{Q})]=0$, the
functional $J_{\vc{Q},1}(S)$ is in fact the 
variance of the score $r_1(S; P, \vc{Q})$.}
\item{In the case of a single summand $S=Y_1\sim C_QR$,
if $Q$ is the point mass at 1
then the score $r_1$ reduces to the 
score function $\rho_Y$ in (\ref{eq:scaledS}).
Thus $J_{\vc{Q},1}$ can be seen as
a generalization of the scaled Fisher information $J_{\pi}$
of \cite{KHJ05}
defined in (\ref{eq:scaledF}).}
\item{Again in 
the case of a single summand $S=Y_1\sim C_Q R$,
we have that $r_1(s;P,Q) \equiv 0$ if and only
if $R^{\#}=R$, i.e., if and only if $R$ is the 
$\Pol$ distribution.
Thus in this case $J_{Q,1}(S)=0$  
if and only if $S\sim\Cpo$ for some $\lam>0$.}
\item{In general, writing $F^{(i)}$ for the distribution
of the leave-one-out sum $\sum_{j\neq i}Y_i$,
$$ r_1( \cdot; P,\vc{Q}) \equiv 0 \iff \sum p_i F^{(i)} * (C_{Q_i} R_i  - 
C_{Q_i} R_i^{\#}) \equiv  0.$$
Hence within the class of ultra log-concave $R_i$ (a class
which includes compound Bernoulli sums), since the moments 
of $R_i$ are no smaller than the moments of
$R_i^{\#}$ with equality if and only if
$R_i$ is Poisson,
the score $r_1(\cdot;P,\vc{Q}) \equiv 0$ if and only
if the $R_i$ 
are all Poisson, i.e., if and only if $P$ is 
compound Poisson.}
\end{enumerate}

\subsection{Katti-Panjer information}
\label{ss:JMLI-defn}

Recall that the recursion (\ref{eq:PoissonR})
characterizing the Poisson distribution
was used as part of the motivation for the
definition of the scaled Fisher information 
$J_\pi$ in (\ref{eq:scaledF}) and (\ref{eq:scaledS}).
In an analogous manner, we employ the 
Katti-Panjer recursion (\ref{eq:panjer})
that characterizes the compound Poisson
law to define another information
functional.

\begin{defn} 
Given a $\Zpl$-valued random variable $Y\sim P$ 
and an arbitrary distribution $Q$ on $\NN$,
the {\em Katti-Panjer information of $Y$ 
relative to $Q$} is defined as,
\ben 
J_{Q,2}(Y) := E[ r_{2}(Y;P,Q)^2],
\een
where the score function $r_{2}$ is,
\ben 
r_{2}(y;P,Q) 
:= \frac{\lam \sum_{j=1}^{\infty} j Q(j) P(y-j)}{P(y)} - y ,
\;\;\;\;y\in\Zpl,
\een
and where $\lambda$ is the ratio of the mean of $Y$ to 
the mean of $Q$.
\end{defn}

From the definition of the score function
$r_2$ it is immediate that,
\ben
E[ r_{2}(Y;P,Q)] &= &\sum_y P(y) r_{2}(y;P,Q) \\ 
&= &\lam
\bigg[ \sum_{y: P(y) > 0} \sum_{j} j Q(j) P(y-j) \bigg] - E(Y)  \\
&= & \lam \bigg[ \sum_{j} j Q(j) \bigg] - E(Y)  = 0,
\een
therefore $J_{Q,2}(Y)$ is equal to the variance
of $r_2(Y;P,Q)$. [This computation 
assumes that $P$ has full support on $\Zpl$;
see the last paragraph of this section for 
further discussion of this point.]
Also, in view of the Katti-Panjer 
recursion (\ref{eq:panjer}) we have
that $J_{Q,2}(Y)=0$ if and only if
$r_{2}(y;P,Q)$ vanishes for all $y$,
which happens if and only if the 
distribution $P$ of $Y$ is $\Cpo$. 

In the special case when $Q$ is the unit
mass at 1, the Katti-Panjer information
of $Y\sim P$ reduces to, 
\be
J_{Q,2}(Y)=E\Big[\Big(\frac{\lambda P(Y-1)}{P(Y)}-Y\Big)^2\Big]
=\lambda^2I(Y)+(\sigma^2-2\lambda),
\label{eq:generalize}
\ee
where $\lambda,\sigma^2$ are the mean and variance of $Y$,
respectively, and
$I(Y)$ denotes the functional,
\be
I(Y):=E\Big[\Big(\frac{P(Y-1)}{P(Y)}-1\Big)^2\Big] ,
\label{eq:JM}
\ee
proposed by Johnstone and MacGibbon \cite{JM87} 
as a discrete version of the Fisher information
(with the convention $P(-1)=0$).
Therefore, in view of (\ref{eq:generalize}) 
we can think of $J_{Q,2}(Y)$ as a generalization 
of the ``Fisher information'' functional $I(Y)$ of \cite{JM87}.

Finally note that, although the definition of 
$J_{Q,2}$ is more straightforward than that of
$J_{\vc{Q},1}$, the Katti-Panjer information 
suffers the drawback that -- like its simpler version
$I(Y)$ in \cite{JM87} --
it is only finite
for random variables $Y$ with full support on $\Zpl$.
As noted in \cite{Kag01} and \cite{KHJ05}, the 
definition of $I(Y)$ cannot simply be extended 
to all $\Zpl$-valued random variables by just
ignoring the points outside the support of $P$,
where the integrand in (\ref{eq:JM}) becomes infinite.
This was, partly, the motivation for the definition
of the scaled scored function $J_\pi$ in \cite{KHJ05}.
Similarly, in the present setting, the important 
properties of $J_{Q,2}$ established in the following
sections {\em fail} unless $P$ has full
support, unlike for the size-biased information 
$J_{\vc{Q},1}$.


\section{Subadditivity}
\label{ss:subadd}

The subadditivity property of Fisher information
(Property~(C) in the Introduction) plays a key role
in the information-theoretic analysis of normal
approximation bounds. The corresponding property
for the scaled Fisher information (Proposition~3 of \cite{KHJ05})
plays an analogous role in the case of Poisson approximation.
Both of these results are based on a convolution identity 
for each of the two underlying score functions.
In this section we develop natural analogs of the
convolution identities and resulting subadditivity
properties for the functionals $J_{\vc{Q},1}$ and
$J_{Q,2}$.

\subsection{Subadditivity of the size-biased information} 
\label{ss:sbLI} 

The proposition below gives the natural analog
of Property~(C) in the the Introduction,
for the information functional $J_{\vc{Q},1}$.
It generalizes the convolution lemma and 
Proposition~3 of \cite{KHJ05}.

\begin{prop}\label{prop:subadd}
Consider the sum
$S_n = \sum_{i=1}^n Y_i \sim P$
of $n$ independent $\Zpl$-valued random variables
$Y_i\sim P_i= C_{Q_i} R_i$, $i=1,2,\ldots,n$.
For each $i$,
let $q_i$ denote the mean of $Q_i$,
$p_i = E(Y_i)/q_i$ and 
$\lam = \sum_i p_i$. Then,
\be
r_1(s; P, \vc{Q}) = E \left[ 
\left. \sum_{i=1}^n \frac{p_i}{\lam} r_1(Y_i; P_i, Q_i) \right|
S_n = s \right],
\label{eq:proj1}
\ee
and hence,
\be
J_{\vc{Q},1}(S_n) \leq \sum_{i=1}^m \frac{p_i}{\lam}
J_{Q_i,1}(Y_i).
\label{eq:sub1}
\ee
\end{prop}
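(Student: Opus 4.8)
The plan is to prove the conditional-expectation identity~\eqref{eq:proj1} first, and then deduce the subadditivity bound~\eqref{eq:sub1} by a routine application of the conditional Jensen inequality. The structure mirrors the proof of Proposition~3 in \cite{KHJ05}, with the reduced size-biasing replaced by the compound reduced size-biasing $R_j\mapsto C_{Q_j}(R_j^\#)$ that enters Definition~\ref{defn:SBLI-basic}.

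\medskip
\noindent\textbf{Step 1: A convolution identity for the scores.}
Fix $s\in\Zpl$ with $P(s)>0$. The key observation is that, by definition,
$P^{(i)}$ is the distribution of the sum $S_n$ with the single summand $Y_i\sim P_i$ replaced by $Y_i'\sim C_{Q_i}(R_i^\#)$. Writing $F^{(i)}$ for the distribution of the leave-one-out sum $\sum_{j\neq i}Y_j$, we have $P=F^{(i)}*P_i$ and $P^{(i)}=F^{(i)}*\widetilde P_i$, where $\widetilde P_i:=C_{Q_i}(R_i^\#)$. Hence
\[
p_i\bigl(P^{(i)}(s)-P(s)\bigr)=\sum_{t}F^{(i)}(s-t)\,p_i\bigl(\widetilde P_i(t)-P_i(t)\bigr)
=\sum_{t}F^{(i)}(s-t)\,P_i(t)\,p_i\,r_1(t;P_i,Q_i),
\]
the last equality being just the definition of the single-summand score $r_1(t;P_i,Q_i)=\widetilde P_i(t)/P_i(t)-1$ (interpreted as $0\cdot\infty=0$ off the support of $P_i$). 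Summing over $i$ and dividing by $\lambda P(s)$ gives
\[
r_1(s;P,\vc{Q})=\frac{1}{\lambda P(s)}\sum_{i=1}^n\sum_{t}F^{(i)}(s-t)P_i(t)\,p_i\,r_1(t;P_i,Q_i).
\]
Now I recognize $F^{(i)}(s-t)P_i(t)/P(s)$ as the conditional probability $\Pr\{Y_i=t\mid S_n=s\}$, since $S_n=Y_i+\sum_{j\neq i}Y_j$ with independent summands. Therefore the inner sum over $t$ is exactly $\lambda^{-1}p_i\,E[r_1(Y_i;P_i,Q_i)\mid S_n=s]$, and summing over $i$ yields~\eqref{eq:proj1}.

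\medskip
\noindent\textbf{Step 2: From the identity to subadditivity.}
Given~\eqref{eq:proj1}, write $W:=\sum_{i=1}^n\frac{p_i}{\lambda}r_1(Y_i;P_i,Q_i)$, so that $r_1(S_n;P,\vc{Q})=E[W\mid S_n]$. By the conditional Jensen inequality (convexity of $x\mapsto x^2$), $E[r_1(S_n;P,\vc{Q})^2]=E\bigl[E[W\mid S_n]^2\bigr]\leq E[W^2]$, hence
\[
J_{\vc{Q},1}(S_n)=\lambda\,E[r_1(S_n;P,\vc{Q})^2]\leq\lambda\,E[W^2].
\]
It remains to bound $E[W^2]$. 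Since each $Y_i'$ is built from an independent copy, the summands $r_1(Y_i;P_i,Q_i)$, $i=1,\dots,n$, are independent, and each has mean zero (this is the first listed basic property of $J_{\vc{Q},1}$, applied to a single summand: $E[r_1(Y_i;P_i,Q_i)]=0$). Thus the cross terms vanish and
\[
E[W^2]=\sum_{i=1}^n\frac{p_i^2}{\lambda^2}\,E[r_1(Y_i;P_i,Q_i)^2]=\sum_{i=1}^n\frac{p_i^2}{\lambda^2}\cdot\frac{1}{p_i}J_{Q_i,1}(Y_i)=\frac{1}{\lambda^2}\sum_{i=1}^n p_i\,J_{Q_i,1}(Y_i),
\]
using $J_{Q_i,1}(Y_i)=p_i\,E[r_1(Y_i;P_i,Q_i)^2]$ from the single-summand case of Definition~\ref{defn:SBLI-basic} (there $\lambda$ is replaced by $p_i$). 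Multiplying by $\lambda$ gives $J_{\vc{Q},1}(S_n)\leq\frac{1}{\lambda}\sum_i p_i J_{Q_i,1}(Y_i)=\sum_i\frac{p_i}{\lambda}J_{Q_i,1}(Y_i)$, which is~\eqref{eq:sub1}.

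\medskip
\noindent\textbf{Main obstacle.}
The only delicate point is the bookkeeping around points outside the support of the various distributions: one must check that the identity in Step~1 is unaffected by dividing by $P(s)$ or $P_i(t)$ at such points, using the stated convention that $r_1$ is ``only infinite at points outside the support of $P$, which do not affect the definition.'' Concretely, the support of $P^{(i)}$ can differ from that of $P$ only through the support of $\widetilde P_i$ versus $P_i$, and since $R_i^\#$ is absolutely continuous with respect to $R_i$ shifted by one (more precisely $R_i^\#(y)>0\Rightarrow R_i(y+1)>0$), compounding shows $\widetilde P_i\ll P_i$ fails only in a way that, after convolution with $F^{(i)}$, is swallowed by $P(s)>0$; one records that the terms with $P_i(t)=0$ contribute nothing on both sides. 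This is the same subtlety handled in \cite{KHJ05}, and a short remark suffices. Everything else is the mean-zero, independence, and Jensen argument above.
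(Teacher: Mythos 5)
Your proof is correct and follows essentially the same route as the paper's: the projection identity is obtained by the same leave-one-out convolution computation (the paper writes the conditional expectation out via $P_i(x)F^{(i)}(s-x)/P(s)$ and telescopes to $\sum_i p_iP^{(i)}(s)/(\lambda P(s))-1$), and the subadditivity then follows from conditional Jensen together with independence and the zero-mean property of the individual scores, a step the paper delegates to Proposition~3 of \cite{KHJ05} but which you have usefully written out, including the normalization $J_{Q_i,1}(Y_i)=p_i\,E[r_1(Y_i;P_i,Q_i)^2]$.
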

\begin{proof} 
In the notation of Definition~\ref{defn:SBLI-basic}
and the subsequent discussion,
writing $F^{(i)} = P_1 * \ldots * P_{i-1} * P_{i+1} 
* \ldots * P_m$, so that $P^{(i)} = F^{(i)} *
C_{Q_i} R_i^{\#}$, the right-hand side of the 
projection identity (\ref{eq:proj1}) equals,
\begin{eqnarray*}
\lefteqn{ \sum_{i=1}^n \sum_x
\frac{ P_i(x) F^{(i)}(s-x)}{P(s)} 
\left( \frac{p_i}{\lam} \left( \frac{ C_{Q_i} R_i^{\#}(x)}{P_i(x)} - 1
\right) \right) } \\
& = & \frac{1}{\lam P(s)} \left( \sum_{i=1}^n \sum_x p_i C_{Q_i} R_i^{\#}(x) F^{(i)}(s-x) \right) - 1 \\
& = & \frac{1}{\lam P(s)} \left( \sum_{i=1}^n p_i P^{(i)}(s) \right) - 1 \\
& = & r_1(s; P, \vc{Q}),
\end{eqnarray*}
as required.
The subadditivity result follows using the conditional Jensen
inequality, exactly as in the proof of Proposition~3 
of~\cite{KHJ05}.
\end{proof}

\begin{cor} \label{cor:j1simple}
Under the assumptions of Proposition~{\em \ref{prop:subadd}},
if each $Y_i=B_i X_i$, where $B_i\sim\Bern(p_i)$ and
$X_i \sim Q_i$ where $p_i=\Pr\{Y_i\neq 0\}$ and
$Q_i(k)=\Pr\{Y_i=k|Y_i\geq 1\}$, then,
$$ J_{\vc{Q},1}(S_n) 
\leq \frac{1}{\lam} \sum_{i=1}^n \frac{p_i^3}{1-p_i},$$
where $\lam = \sum_i p_i$.
\end{cor}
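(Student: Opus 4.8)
The plan is to apply Proposition~\ref{prop:subadd} to the specific decomposition $Y_i=B_iX_i$ and then compute the single-summand quantity $J_{Q_i,1}(Y_i)$ explicitly. In the notation of that proposition, the summand $Y_i=B_iX_i$ has distribution $P_i=C_{Q_i}R_i$ with $R_i\sim\Bern(p_i)$, so the subadditivity bound~(\ref{eq:sub1}) gives
\[
J_{\vc{Q},1}(S_n)\leq\sum_{i=1}^n\frac{p_i}{\lam}\,J_{Q_i,1}(Y_i),
\]
and it remains to show $J_{Q_i,1}(Y_i)\leq p_i^2/(1-p_i)$ for each $i$, since then $\sum_i(p_i/\lam)\cdot p_i^2/(1-p_i)$ is exactly the claimed bound.

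The key observation is that $J_{Q,1}$, for a single summand of the form $C_QR$, should not actually depend on $Q$ at all: the compounding operation $C_Q$ is applied identically to both $R$ and $R^\#$, and $r_1(\cdot;P,Q)\equiv 0$ iff $R^\#=R$ (as noted in the basic properties following Definition~\ref{defn:SBLI-basic}). More precisely, I would verify that for $R\sim\Bern(p)$ the reduced size-biased distribution $R^\#$ is the point mass at $0$ (since $R$ has mean $p$ and $R^\#(y)=(y+1)R(y+1)/p$, giving $R^\#(0)=R(1)/p=1$). Hence $Y_j'\sim C_{Q_j}(R_j^\#)$ is almost surely $0$, i.e.\ $P^{(j)}=F^{(j)}$ in the single-summand case reduces to the point mass at $0$, and the score $r_1(y;P_i,Q_i)=\frac{p_i}{p_i}\cdot\frac{\delta_0(y)}{P_i(y)}-1$ specialized to a single summand. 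The cleanest route, however, is probably to recall from the basic properties that when $Q$ is the point mass at $1$ the score $r_1$ reduces to the scaled score $\rho_Y$ of~(\ref{eq:scaledS}), and more generally to directly relate $J_{Q,1}(B_iX_i)$ to $J_\pi(B_i)$. Indeed, the convolution/projection structure shows $J_{Q,1}(C_QR)=J_{\pi}(R)$ by a data-processing-type argument (compounding is a Markov kernel that does not change the relevant score), so $J_{Q_i,1}(Y_i)=J_\pi(B_i)$.

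Thus the computation reduces to evaluating $J_\pi$ for a Bernoulli: from~(\ref{eq:scaledF})--(\ref{eq:scaledS}), with $B\sim\Bern(p)$ having mean $\lambda=p$, the scaled score is $\rho_B(0)=\frac{P(1)}{pP(0)}-1=\frac{p}{p(1-p)}-1=\frac{p}{1-p}$ and $\rho_B(1)=\frac{2P(2)}{pP(1)}-1=-1$ (since $P(2)=0$). Hence $J_\pi(B)=p\,E[\rho_B(B)^2]=p\big[(1-p)\cdot\frac{p^2}{(1-p)^2}+p\cdot 1\big]=p\big[\frac{p^2}{1-p}+p\big]=\frac{p^2}{1-p}$. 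Substituting $J_{Q_i,1}(Y_i)=J_\pi(B_i)=p_i^2/(1-p_i)$ into the subadditivity bound yields the corollary. This is exactly the computation already carried out in~\cite{KHJ05} to derive the Poisson bound~(\ref{poi-bound}), so in the write-up it suffices to cite that identity.

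The main obstacle is the middle step: justifying cleanly that $J_{Q,1}(C_QR)=J_\pi(R)$, i.e.\ that compounding by $Q$ leaves the size-biased information unchanged. One must check that $C_Q(R^\#)=(C_QR)^{\#}$ is \emph{not} what is needed (that identity is false in general); rather, what is needed is that the score $r_1$ for the compounded law, averaged against the compounded distribution, has the same $L^2(P)$ norm times $\lambda$ as the original — which follows because $C_Q$ acts as a deterministic function $s\mapsto\sum_{i\le s}X_i$ composed with randomization, so the projection identity of Proposition~\ref{prop:subadd} (with $n=1$) literally says $r_1(s;C_QR,Q)=E[\,\rho_R(R)\mid C_QR=s\,]$ up to the normalization, whence $J_{Q,1}(C_QR)\le J_\pi(R)$ by conditional Jensen. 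For the matching lower bound (which we do not even need, since we only want an inequality), one would note equality holds when $R$ is a single Bernoulli because then $R^\#=\delta_0$ makes everything explicit. Since only the upper bound is required for the corollary, even the $n=1$ case of Proposition~\ref{prop:subadd} already delivers $J_{Q_i,1}(Y_i)\le J_\pi(B_i)$ directly, and no separate argument is needed — this is likely the slickest presentation.
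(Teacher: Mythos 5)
Your proof is correct, and its essential content coincides with the paper's own argument: identify $R_i^{\#}=\delta_0$ for $R_i\sim\Bern(p_i)$, hence $C_{Q_i}(R_i^{\#})=\delta_0$, compute the single-summand score explicitly (it equals $p_i/(1-p_i)$ at $0$ and $-1$ on $\NN$), obtain $J_{Q_i,1}(Y_i)=p_i^2/(1-p_i)$, and substitute into the subadditivity bound (\ref{eq:sub1}). The numerical evaluation you carry out for $J_\pi(\Bern(p))$ is literally the same computation, so nothing is missing. The one point to fix is the justification offered for the detour through $J_\pi$: the $n=1$ case of Proposition~\ref{prop:subadd} is a tautology ($S_1=Y_1$, $P=P_1$, $\lam=p_1$), so it does not by itself deliver $J_{Q,1}(C_QR)\le J_\pi(R)$. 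What does deliver that inequality is a separate compounding projection identity, $r_1\bigl(s;C_QR,Q\bigr)=E\bigl[\rho_R(R)\mid \sum_{i=1}^{R}X_i=s\bigr]$, which is true and follows from the same one-line rearrangement used to prove (\ref{eq:proj1}), combined with conditional Jensen. Since your direct computation of the score already gives the exact value $p_i^2/(1-p_i)$, that extra machinery can simply be dropped from the write-up.
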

\begin{proof} Consider $Y = B X$, where $B\sim R=\Bern(p)$
and $X \sim Q$. 
Since $R^{\#}=\delta_{0}$
then $C_{Q}(R^{\#})=\delta_{0}$. Further, $Y$ takes the value
0 with probability $(1-p)$ and the value $X$ with probability $p$.
Thus,
\ben
r_{1}(x;C_Q R,Q) &= &\frac{C_{Q}(R^{\#})(x)}{C_{Q}R(x)}-1 \\
&= & \frac{ \delta_0(x)}{(1-p) \delta_0(x) + p Q(x)}- 1 \\
&= & \left\{ 
\begin{array}{ll} 
\frac{p}{1-p} & \textrm{ for $x=0$}\\ 
-1 & \textrm{ for $x>0$.} 
\end{array} \right. 
\een
Consequently,
\be \label{eq:j1oneterm}
J_{Q,1}(Y) = \frac{p^2}{1-p},
\ee
and using Proposition~\ref{prop:subadd} yields,
$$ J_{\vc{Q},1}( S_n) \leq 
\sum_{i=1}^n \frac{p_i}
{\lam} J_{Q_i,1}(Y_i) = \frac{1}{\lam} \sum_{i=1}^n \frac{p_i^3}{1-p_i},$$ 
as claimed.
\end{proof}

\subsection{Subadditivity of the Katti-Panjer information} 
\label{ss:JMLI}
When $S_n$ is supported on the whole of $\Zpl$,
the score $r_2$ satisfies a convolution 
identity and the functional $J_{Q,2}$ is subadditive. 
The following Proposition contains the analogs 
of (\ref{eq:proj1}) and (\ref{eq:sub1}) 
in Proposition~\ref{prop:subadd}
for the Katti-Panjer information $J_{Q,2}(Y)$.
These  can also be viewed as generalizations
of the corresponding results for the 
Johnstone-MacGibbon functional $I(Y)$ 
established in \cite{JM87}. 

\begin{prop} \label{prop:proj2} 
Consider a sum $S_n = \Sumn Y_i$ 
of independent random variables $Y_i=B_iX_i$,
where each $X_i$ has distribution $Q_i$ on $\NN$
with mean $q_i$, and each $B_i\sim\Bern(p_i)$.
Let $\lam = \sum_{i=1}^n p_i$
and $Q=\sum_{i=1}^n\frac{p_i}{\lambda}Q_i$.
If each $Y_i$ is supported on the whole of $\Zpl$,
then,
$$r_{2}(s;S_{n},Q) = E\left[ \left. \sum_{i=1}^n r_{2}(Y_{i};P_i,Q_i)
\right| S_{n} = s \right],
$$
and hence,
$$ J_{Q,2}(S_n) \leq \sum_{i=1}^n J_{Q_{i},2}(Y_i).$$
\end{prop}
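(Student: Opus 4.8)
The plan is to follow the same two-step pattern used in the proof of Proposition~\ref{prop:subadd}: first establish a projection (conditional-expectation) identity for the score $r_2$, and then deduce subadditivity from the conditional Jensen inequality together with independence. Throughout I write $F^{(i)} = P_1 * \cdots * P_{i-1} * P_{i+1} * \cdots * P_n$ for the leave-one-out convolution, so that $P = P_{S_n} = F^{(i)} * P_i$ for every $i$, and I record that the mean-ratio parameter attached to $r_2(\cdot;P,Q)$ equals $\lambda = \sum_i p_i$ --- since $S_n$ has mean $\sum_i p_i q_i$ and $Q = \sum_i(p_i/\lambda)Q_i$ has mean $\sum_i(p_i/\lambda)q_i$ --- while the parameter attached to $r_2(\cdot;P_i,Q_i)$ is $p_i$.

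First I would introduce the unnormalised numerators $g_P(s) := \lambda\sum_{j\ge 1} jQ(j)P(s-j)$ and $g_{P_i}(s) := p_i\sum_{j\ge 1} jQ_i(j)P_i(s-j)$, so that $r_2(s;P,Q) = g_P(s)/P(s) - s$ and likewise for each summand. The key lemma is the additive convolution identity $g_P = \sum_{i=1}^n F^{(i)} * g_{P_i}$, which follows by substituting $\lambda Q(j) = \sum_i p_i Q_i(j)$ into the definition of $g_P$, writing $P(s-j) = (F^{(i)} * P_i)(s-j)$, and reordering the (nonnegative, hence Tonelli-summable since each $q_i < \infty$) sums --- exactly the analogue of the display in the proof of Proposition~\ref{prop:subadd}.

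Next I would expand the right-hand side of the claimed identity. Since each $Y_i$, and hence $S_n$, has full support, $\Pr(Y_i = y \mid S_n = s) = P_i(y)F^{(i)}(s-y)/P(s)$, so that
\[ E\bigl[\,r_2(Y_i;P_i,Q_i)\,\big|\,S_n=s\,\bigr] \;=\; \frac{1}{P(s)}\Bigl[\,(F^{(i)}*g_{P_i})(s) \;-\; \sum_{y} y\,P_i(y)\,F^{(i)}(s-y)\,\Bigr]. \]
Summing over $i$, the first terms add up to $g_P(s)$ by the convolution identity, while $\sum_i\sum_y y\,P_i(y)\,F^{(i)}(s-y) = \sum_i E[\,Y_i\bfone\{S_n=s\}\,] = E[\,S_n\bfone\{S_n=s\}\,] = s\,P(s)$. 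Dividing through by $P(s)$ gives $g_P(s)/P(s) - s = r_2(s;S_n,Q)$, which is the projection identity.

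Finally, for subadditivity: full support of $P$ gives $E[r_2(S_n;S_n,Q)]=0$, hence $J_{Q,2}(S_n) = E[r_2(S_n;S_n,Q)^2]$, and the conditional Jensen inequality applied to the projection identity yields $J_{Q,2}(S_n) \le E\bigl[\bigl(\sum_i r_2(Y_i;P_i,Q_i)\bigr)^2\bigr]$. The random variables $r_2(Y_i;P_i,Q_i)$ are independent (functions of the independent $Y_i$) and, since each $P_i$ has full support, each has mean zero; so the cross terms vanish and the bound equals $\sum_i E[r_2(Y_i;P_i,Q_i)^2] = \sum_i J_{Q_{i},2}(Y_i)$, as claimed. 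The only genuinely delicate points are the full-support hypotheses --- needed both for the conditional probabilities and for the zero-mean property of the summand scores --- and the justification of the summation interchanges via finiteness of the $q_i$; everything else runs parallel to the size-biased case of Proposition~\ref{prop:subadd}.
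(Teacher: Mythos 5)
Your proposal is correct and follows essentially the same route as the paper's proof: the identity $\lambda Q(j)=\sum_i p_i Q_i(j)$ combined with the decomposition $P=F^{(i)}*P_i$ yields the projection identity, and conditional Jensen plus independence and the zero-mean property of each $r_2(Y_i;P_i,Q_i)$ (which kills the cross terms) gives subadditivity. The only cosmetic difference is that you verify the identity by expanding the conditional expectation on the right-hand side, whereas the paper rearranges the left-hand side directly; the content is identical.
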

\begin{proof}
Write $r_{2,i}(\cdot)$ for $r_{2}(\cdot;P_{i},Q_{i})$,
and note that $E(Y_i)=p_iq_i$, for each $i$.
Therefore, $E(S_n) = \sum_i p_i q_i$ which equals
$\lam$ times the mean of $Q$. As before,
let $F^{(i)}$ denote the distribution of the
leave-one-out sum $\sum_{j \neq i} Y_j$,
and decompose the distribution $P_{S_n}$
of $S_n$ as $P_{S_n}(s) = \sum_x P_{i}(x) F^{(i)}(s-x)$. 
We have,
\bean
r_{2}(s;S_{n},Q) & = & 
\frac{\lam \sum_{j=1}^{\infty} j Q(j) P_{S_n}(s-j)}{ 
P_{S_n}(s)} - s  \\
& = & \sum_{i=1}^n 
\frac{p_i \sum_{j=1}^{\infty} j Q_i(j) P_{S_n}(s-j)}{P_{S_n}(s)} - s \\
& = & \sum_{i=1}^n \sum_x 
\frac{p_i \sum_{j=1}^{\infty} j Q_i(j) P_{i}(x-j)
F^{(i)}(s-x)}{P_{S_n}(s)} - s \\ 
& = & \sum_{i=1}^n 
\sum_x \frac{ P_{i}(x) F^{(i)}(s-x)}{P_{S_n}(s)} 
\left[ \frac{p_i  \sum_{j=1}^{\infty} j Q_i(j) P_{i}(x-j)}{P_{i}(x)} \right] 
- s
\\
& = & E \left[ \left. \sum_{i=1}^n r_{2,i}(Y_i) \right| S_n = s \right]
\eean
proving the projection identity. And
using the conditional Jensen inequality,
noting that the cross-terms vanish 
because $E[r_{2}(X;P,Q) = 0]$ 
for any $X\sim P$ with full support
(cf.\ the discussion in Section~\ref{ss:JMLI-defn}),
the subadditivity result follows, as claimed.
\end{proof}

\newpage

\section{Information functionals dominate total variation}
\label{ss:tvbd}

In the case of Poisson approximation, 
the modified log-Sobolev inequality (\ref{eq:logS})
directly relates the relative entropy to the 
scaled Fisher information $J_{\pi}$. 
However, the known (modified) log-Sobolev 
inequalities for compound Poisson
distributions \cite{Wu00,KM06}, only relate the relative
entropy to functionals different
from $J_{\vc{Q},1}$ or $J_{Q,2}$.
Instead of developing subadditivity results
for those other functionals, we build,
in part, on some 
of the ideas from Stein's method 
and prove relationships 
between the total variation distance and the 
information functionals $J_{\vc{Q},1}$ and $J_{Q,2}$.
(Note, however, that Lemma~\ref{lem:logsob} does offer a partial 
result showing that the relative entropy can be bounded 
in terms of $J_{\vc{Q},1}$.)

To illustrate the connection between these two information 
functionals and Stein's method, we find it simpler 
to first examine the Katti-Panjer information.
Recall that,
for an arbitrary function $h:\Zpl\to\RL$, 
a function $g:\Zpl\to\RL$ satisfies 
the {\em Stein equation} for the compound Poisson
measure $\Cpo$ if,
\begin{equation}
\lambda\sum_{j=1}^\infty jQ(j)g(k+j)=kg(k)+\Big[h(k)-E[h(Z)]\Big],
\;\;\;\;g(0)=0,
\label{eq:stein}
\end{equation}
where $Z\sim\Cpo$.
[See, e.g., \cite{Erh05} for details
as well as a general review of Stein's method 
for Poisson and compound Poisson approximation.]
Letting $h=\IND_A$ for some $A\subset\Zpl$,
writing $g_A$ for the corresponding solution
of the Stein equation, 
and taking expectations with respect to an
arbitrary random variable $Y\sim P$ on $\Zpl$,
$$P(A)-\Pr\{Z\in A\}=
E\left\{ \lambda\sum_{j=1}^\infty jQ(j)g_A(Y+j)-Yg_A(Y)\right\}.$$
Then taking absolute values and maximizing
over all $A\subset\Zpl$, 
\begin{equation}
\label{eq:steinbasic}
d_{\rm TV}(P,\Cpo)
\leq
	\sup_{A\subset\Zpl}
	\left|
	E\left\{ \lambda\sum_{j=1}^\infty jQ(j)g_A(Y+j)-Yg_A(Y)\right\}
	\right|.
\end{equation}
Noting that the expression in the expectation
above is reminiscent of the Katti-Panjer recursion
(\ref{eq:panjer}), it is perhaps not surprising that
this bound relates directly to the 
Katti-Panjer information functional:

\begin{prop}
\label{prop:jloc}
For any random variable $Y\sim P$ on $\Zpl$,
any distribution $Q$
on $\NN$ and any $\lambda>0$,
$$
d_{\rm TV}(P,\Cpo)
\leq
H(\lambda,Q)\sqrt{J_{Q,2}(Y)},
$$
where $H(\lambda,Q)$ is the Stein factor defined
in {\em (\ref{eq:steinF})}.
\end{prop}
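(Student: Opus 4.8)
The plan is to start from the Stein-method bound \eqref{eq:steinbasic}, which already expresses $d_{\rm TV}(P,\Cpo)$ as the supremum over $A\subset\Zpl$ of the absolute value of $E\{\lambda\sum_j jQ(j)g_A(Y+j)-Yg_A(Y)\}$, and to massage the integrand so that the Katti-Panjer score $r_2$ appears explicitly. The key algebraic step is to rewrite the expectation by summing over the distribution $P$ of $Y$: $E\{\lambda\sum_j jQ(j)g_A(Y+j)\}=\sum_k P(k)\lambda\sum_j jQ(j)g_A(k+j)$, and then reindex the double sum (set $y=k+j$) to collect the coefficient of each $g_A(y)$, which is exactly $\lambda\sum_{j=1}^{y}jQ(j)P(y-j)$. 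Hence $E\{\lambda\sum_j jQ(j)g_A(Y+j)-Yg_A(Y)\}=\sum_y g_A(y)\big[\lambda\sum_{j\geq1}jQ(j)P(y-j)-yP(y)\big]=\sum_y g_A(y)P(y)\,r_2(y;P,Q)=E[g_A(Y)\,r_2(Y;P,Q)]$, using the definition of $r_2$. (Here I should be slightly careful: $P$ may not have full support, but the rewriting only involves $y$ with $P(y)>0$, and at such $y$ the quantity $P(y)r_2(y;P,Q)=\lambda\sum_j jQ(j)P(y-j)-yP(y)$ is well-defined; so this identity is fine even when $J_{Q,2}(Y)=\infty$, in which case the claimed bound is trivially true and we may assume full support henceforth.)

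Next I would apply the Cauchy--Schwarz inequality to $E[g_A(Y)\,r_2(Y;P,Q)]$. Since $E[r_2(Y;P,Q)]=0$ for $Y$ with full support (shown in Section~\ref{ss:JMLI-defn}), we actually have $E[g_A(Y)r_2(Y;P,Q)]=E[(g_A(Y)-c)r_2(Y;P,Q)]$ for any constant $c$; choosing $c$ to be, say, $g_A(0)=0$ or more cleverly to center $g_A$, and bounding the first factor by the sup-norm of (the centered) $g_A$, one gets
$$
\big|E[g_A(Y)r_2(Y;P,Q)]\big|\leq \|g_A\|_\infty\,\sqrt{E[r_2(Y;P,Q)^2]}=\|g_A\|_\infty\sqrt{J_{Q,2}(Y)}.
$$
Taking the supremum over $A$ then gives $d_{\rm TV}(P,\Cpo)\leq \big(\sup_A\|g_A\|_\infty\big)\sqrt{J_{Q,2}(Y)}$.

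The remaining ingredient is to identify $\sup_{A\subset\Zpl}\|g_A\|_\infty$ with the Stein factor $H(\lambda,Q)$ of Definition~\ref{defn:SteinF}. This is not something to re-derive from scratch: the sup-norm bounds on solutions of the compound Poisson Stein equation are standard and are precisely what the literature on Stein's method for compound Poisson approximation (e.g.\ \cite{BHJ92:book,Erh05,barbour:monograph}) provides --- the monotone case giving $H_0(\lambda,Q)$ via the $\delta=[\lambda(Q(1)-2Q(2))]^{-1}$ formula, and the general case giving the $e^\lambda\min\{1,1/(\lambda Q(1))\}$ bound. So the step here is to quote the appropriate known estimate for $\|g_A\|_\infty$ (being careful about whether one needs $\|g_A\|_\infty$ or the first-difference sup-norm $\|\Delta g_A\|_\infty$; since the score $r_2$ multiplies $g_A$ itself and not its difference, it is the sup-norm of $g_A$ that is relevant, matching the definition of $H(\lambda,Q)$).

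I expect the main obstacle to be the bookkeeping around support and centering: making sure the reindexing of the double sum is valid, that the $E[r_2]=0$ identity is used correctly to justify replacing $g_A$ by a centered version (so that $\|g_A\|_\infty$ can legitimately be taken as the relevant constant rather than something larger), and that the cited Stein-factor bound is stated in the form that matches $H(\lambda,Q)$ exactly. The Cauchy--Schwarz step and the reduction to \eqref{eq:steinbasic} are routine; the care lies entirely in matching constants and handling the non-full-support case cleanly (by disposing of it at the outset as the trivial case $J_{Q,2}(Y)=\infty$).
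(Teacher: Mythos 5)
Your proposal is correct and follows essentially the same route as the paper: reduce to the trivial case when $P$ lacks full support (so $J_{Q,2}(Y)=\infty$), reindex the double sum in (\ref{eq:steinbasic}) so that the coefficient of $g_A(y)$ becomes $\lambda\sum_j jQ(j)P(y-j)-yP(y)=P(y)r_2(y;P,Q)$, bound $\|g_A\|_\infty$ by $H(\lambda,Q)$ via the standard Stein-factor estimate (Lemma~\ref{lem:stein2}), and finish with Cauchy--Schwarz. The only cosmetic difference is your centering remark using $E[r_2(Y;P,Q)]=0$, which is not needed: the pointwise bound $|g_A|\leq\|g_A\|_\infty$ followed by $E|r_2|\leq\sqrt{E[r_2^2]}$ already gives the stated constant, exactly as in the paper.
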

\begin{proof}
We assume without loss of generality that $Y$
is supported on the whole of $\Zpl$, since, otherwise,
$J_{Q,2}(Y)=\infty$ and the result is trivial.
Continuing from the inequality in (\ref{eq:steinbasic}),
\ben
d_{\rm TV}(P,\Cpo)
&\leq&
	\Big(\sup_{A\subset\Zpl}\|g_A\|_\infty\Big)
		\sum_{y=0}^\infty
		\left|
		 \lambda\sum_{j=1}^\infty jQ(j)P(y-j)-yP(y)
		\right|\\
&\leq&
	H(\lambda,Q)
		\sum_{y=0}^\infty
		P(y) |r_2(y;P,Q)|\\
&\leq&
	H(\lambda,Q)\sqrt{
		\sum_{y=0}^\infty
		P(y) r_2(y;P,Q)^2},
\een
where the first inequality follows from rearranging
the first sum, the second inequality follows from 
Lemma~\ref{lem:stein2} below, and the last step
is simply the Cauchy-Schwarz inequality.
\end{proof}

The following uniform bound on the sup-norm
of the solution to the Stein equation (\ref{eq:stein})
is the only auxiliary result we require from
Stein's method. See \cite{BCL92} or \cite{Erh05} 
for a proof.

\begin{lem}\label{lem:stein2}
If $g_A$ is the solution to the Stein equation
{\em (\ref{eq:stein})} for $g=\IND_A$, with $A\subset\Zpl$,
then $\|g_A\|_{\infty} \leq H(\lam, Q)$, 
where $H(\lambda,Q)$ is the Stein factor defined
in {\em (\ref{eq:steinF})}.
\end{lem}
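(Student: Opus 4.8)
This uniform bound is one of the standard ingredients of Stein's method for compound Poisson approximation, and the plan is to reproduce the argument of Barbour, Chen and Loh~\cite{BCL92} (see also~\cite{Erh05}), specialised to the Stein equation~(\ref{eq:stein}). The first step is a usable representation of the solution $g_A$. When $\{jQ(j)\}$ is non-increasing, set $\alpha_k:=\lam\{kQ(k)-(k+1)Q(k+1)\}\ge 0$; then $\Cpo$ is the equilibrium distribution of the immigration--death process on $\Zpl$ whose state jumps up by $k$ at rate $\alpha_k$ and down by $1$ at rate equal to the current state, precisely because $\sum_{k\ge\ell}\alpha_k=\ell\lam Q(\ell)$ for every $\ell$ (so that the generating function of the equilibrium law satisfies the same first-order ODE as that of $\Cpo$). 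Writing $Z_t$ for this process started from $Z_0=k$, one checks that
\ben
g_A(k)=-\int_0^\infty\Big(\pr_k\{Z_t\in A\}-\Cpo(A)\Big)\,dt
\een
solves~(\ref{eq:stein}). In the general case such a process need not exist, and one instead reads $g_A$ off the recursive structure of~(\ref{eq:stein}) directly, starting from $g_A(0)=0$; elementary estimates on the resulting series already give the crude bound $\|g_A\|_\infty\le e^{\lam}\min\{1,1/(\lam Q(1))\}$, which is the second branch of~(\ref{eq:steinF}).

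Given the representation in the monotone case, $\|g_A\|_\infty$ is bounded by $\sup_k\int_0^\infty|\pr_k\{Z_t\in A\}-\Cpo(A)|\,dt$, that is, essentially by the worst expected time to couple $Z_t$ with a stationary copy of the process. A crude coupling already yields a bound of the order of $\delta=[\lam\{Q(1)-2Q(2)\}]^{-1}$ (the relevant rate being $\alpha_1=\lam\{Q(1)-2Q(2)\}$), while the trivial bound $\|g_A\|_\infty\le 1$ always holds since $g_A(0)=0$ and $\|h\|_\infty\le 1$ for $h=\IND_A$. The more delicate estimate of \cite{BCL92}, which separates the first unit of time from the subsequent relaxation of the discrepancy, sharpens the former to $\sqrt{\delta}(2-\sqrt{\delta})$ when $\delta<1$; combined with the trivial bound this is exactly $H_0(\lam,Q)$, and hence $H(\lam,Q)$ in the non-increasing case, while the general case is covered by the crude bound of the first paragraph.

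The step I expect to be the main obstacle is obtaining the \emph{sharp} constant $\sqrt{\delta}(2-\sqrt{\delta})$: the plain coupling estimate $\delta$ is routine, but the improvement requires the careful two-stage analysis of the discrepancy process carried out in~\cite{BCL92}, and in the non-monotone regime one has to bound $g_A$ by hand from~(\ref{eq:stein}) rather than through a clean Markovian coupling. Since all of this is done in detail there, I would simply invoke their bound, which is precisely the statement of the lemma; the only point to check is that the Stein equation~(\ref{eq:stein}) employed here is, up to notation, the equation analysed in~\cite{BCL92}.
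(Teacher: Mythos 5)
Your proposal is correct and matches the paper's treatment: the paper does not prove this lemma at all, but simply cites Barbour, Chen and Loh \cite{BCL92} and Erhardsson \cite{Erh05} for it, exactly as you ultimately do. Your sketch of the immigration--death representation and the coupling bounds accurately summarizes the content of the cited argument, so there is nothing further to check beyond the notational match with (\ref{eq:stein}) that you already flag.
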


\subsection{Size-biased information dominates total variation}
\label{ss:tv-SB}

Next we establish an analogous bound to that of 
Proposition~\ref{prop:jloc} for the size-biased
information $J_{\vc{Q},1}$. As this
functional is not as directly related to the 
Katti-Panjer recursion (\ref{eq:panjer})
and the Stein equation (\ref{eq:steinbasic}),
the proof is technically more involved.

\begin{prop}\label{prop:sb-stein}
Consider a sum $S = \Sumn Y_i\sim P$ 
of independent random variables $Y_i=B_iX_i$,
where each $X_i$ has distribution $Q_i$ on $\NN$
with mean $q_i$, and each $B_i\sim\Bern(p_i)$.
Let $\lam = \sum_{i=1}^n p_i$ and
$Q=\sum_{i=1}^n\frac{p_i}{\lambda}Q_i$.
Then,
\ben
d_\stv(P , \Cpo) \leq 
H(\lambda,Q) q \left( \sqrt{ \lambda J_{\vc{Q},1}(S)} + D(\vc{Q})
\right),
\een
where $H(\lam,Q)$ is the Stein factor
defined defined in {\em (\ref{eq:steinF})},
$q=(1/\lambda)\sum_ip_iq_i$ is the mean of $Q$,
and $D(\vc{Q})$ is the
measure of the dissimilarity between the 
distributions $\vc{Q}=(Q_i)$,
defined in~{\em (\ref{eq:DQ})}.
\end{prop}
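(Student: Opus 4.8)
The plan is to follow the same route used in the proof of Proposition~\ref{prop:jloc}, but with the Katti--Panjer score $r_2$ replaced by a quantity built from the size-biased score $r_1$. First I would start from the Stein bound (\ref{eq:steinbasic}), apply Lemma~\ref{lem:stein2} to pull out $\|g_A\|_\infty\le H(\lambda,Q)$, and thereby reduce the problem to bounding
$$
\sum_{y=0}^\infty\Big|\lambda\sum_{j=1}^\infty jQ(j)P(y-j)-yP(y)\Big|.
$$
The key observation is that this $\ell^1$ quantity must be related back to $J_{\vc Q,1}(S)$, which is phrased in terms of the $P^{(i)}$ and not directly in terms of the convolution $\sum_j jQ(j)P(\cdot-j)$. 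So the first real step is an algebraic identity: rewrite $\lambda\sum_j jQ(j)P(y-j)-yP(y)$ in terms of $\lambda P(y)\,r_1(y;P,\vc Q)$ plus an error term that measures how far the common mixture $Q$ is from the individual $Q_i$. Concretely, using $Q=\sum_i\frac{p_i}{\lambda}Q_i$ and $P=F^{(i)}*C_{Q_i}R_i$, one expands $\sum_j jQ(j)P(y-j)$ over the summands; the "diagonal" contribution assembles into $\sum_i p_i P^{(i)}(y)$ (since size-biasing $R_i$ and then compounding with $Q_i$ reproduces the size-biasing of $Y_i$ up to the factor $q_i$), giving the $r_1$ term, while the discrepancy between $Q$ and each $Q_i$ produces the remainder.

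The second step is to control that remainder by $q\,D(\vc Q)$: the error is a sum over $i$ and $j$ of terms proportional to $p_i|Q_i(j)-Q(j)|\cdot jF^{(i)}*(\cdot)$, and summing the $y$-variable out (using that $F^{(i)}$ and the relevant compound laws are probability distributions) collapses each term to $p_i\,j\,|Q_i(j)-Q(j)|$, which is exactly what appears in (\ref{eq:DQ}) after accounting for the mean factor $q$. The main term, meanwhile, is $\sum_y\lambda P(y)|r_1(y;P,\vc Q)|$ up to the factor $q$ coming from the mean of $Q$ (this is where the leading $q$ in the statement enters, since $y P(y)$ versus $\lambda q\cdot(\text{size-biased weight})$ carries a mean-of-$Q$ factor). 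Applying Cauchy--Schwarz to $\sum_y P(y)|r_1|\le\sqrt{\sum_y P(y)r_1^2}=\sqrt{J_{\vc Q,1}(S)/\lambda}$ and multiplying through by $\lambda$ yields the $\sqrt{\lambda J_{\vc Q,1}(S)}$ term. Collecting the two pieces and the prefactor $H(\lambda,Q)q$ gives the claimed inequality.

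I expect the main obstacle to be getting the algebraic bookkeeping in the first step exactly right — in particular, correctly identifying that the diagonal part of $\lambda\sum_j jQ(j)P(y-j)$ reconstructs $\sum_i p_i P^{(i)}(y)$ (equivalently, $C_{Q_i}(R_i^\#)$ convolved with $F^{(i)}$), and tracking all the mean factors $q_i$ and $q=\frac1\lambda\sum_i p_iq_i$ so that they land precisely as the single factor $q$ in front. The compounding identity that $C_{Q_i}(R_i^\#)$ relates to the convolution $\sum_j jQ_i(j)P_i(\cdot-j)$ (a Katti--Panjer-type relation for $C_{Q_i}R_i$) is the crux; once that is pinned down, the rest is summation of nonnegative terms and one application of Cauchy--Schwarz, both routine.
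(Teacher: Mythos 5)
Your proposal is correct and follows essentially the same route as the paper's proof: starting from the Stein bound (\ref{eq:steinbasic}), using the compounding (Katti--Panjer-type) identity $E[Y_i g_A(S)]=\sum_{j,s}p_i\, j\,Q_i(j)\,P^{(i)}(s)\,g_A(s+j)$ to split the error into a main term carrying $r_1(\cdot;P,\vc{Q})$ and a discrepancy term $\sum_{i,j}p_i j\,|Q(j)-Q_i(j)|=q\,D(\vc{Q})$, then applying Cauchy--Schwarz to the main term. The only (immaterial) difference is that you extract $\|g_A\|_\infty\le H(\lambda,Q)$ at the outset and work with the resulting $\ell^1$ quantity, whereas the paper keeps $g_A$ inside the Cauchy--Schwarz step for the main term before invoking Lemma~\ref{lem:stein2}; both give the identical final bound.
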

\begin{proof} 
For each $i$, let $T^{(i)}\sim F^{(i)}$ denote
the leave-one-out sum $\sum_{j\neq i}Y_i$,
and note that, as in the proof of 
Corollary~\ref{cor:j1simple}, the distribution
$F^{(i)}$ is the same as the distribution $P^{(i)}$
of the modified sum $S^{(i)}$ in 
Definition~\ref{defn:SBLI-basic}.
Since $Y_i$ is nonzero with probability $p_i$,
we have, for each $i$,
\ben
E[Y_{i} g_{A}(S) ]
&=&
	E[Y_ig_A(Y_i+T^{(i)})]\\
&=&
	\sum_{j=1}^\infty\sum_{s=0}^\infty p_i Q_i(j) F^{(i)}(s) jg_A(j+s)\\
&=&
	\sum_{j=1}^\infty\sum_{s=0}^\infty p_i j Q_i(j) P^{(i)}(s) g_A(s+j),
\een
where, for $A\subset\Zpl$ arbitrary, 
$g_A$ denotes the solution of the Stein
equation (\ref{eq:stein}) with $h=\IND_A$.
Hence,
summing over $i$ and substituting 
in the expression in the
right-hand-side of
equation~(\ref{eq:steinbasic}) 
with $S$ in place of $Y$,
yields,
\be
	E\Big\{ \lambda\sum_{j=1}^\infty 
&&
	\hspace*{-0.35in}
	jQ(j)g_A(S+j)
	\Big\}
	-E[Sg_A(S)]
	\nonumber\\
& = &  
	\Suminfs \suminfj g_A(s+j) \left(  \lam jQ(j)P(s)
	- \sum_i p_i j Q_i(j) P^{(i)}(s) \right)  
	\nonumber\\ 
& = &
	\Suminfs \suminfj g_A(s+j) j Q(j) \left(  
	\sum_i p_i (P(s) - P^{(i)}(s)) \right)  
	\nonumber\\ 
& & 
	+  \Suminfs \suminfj g_A(s+j) 
	\left(  \sum_i p_i j (Q(j) - Q_i(j)) P^{(i)}(s) \right) 
	\nonumber \\ 
& = & 
	- \Suminfs \suminfj g_A(s+j) j Q(j) \lambda P(s) \left(  
	\frac{\sum_i p_i P^{(i)}(s)}{\lambda P(s)} - 1 \right)  
	\nonumber\\ 
& & 
	+  \Suminfs \suminfj g_A(s+j) 
	\left(  \sum_i p_i j (Q(j) - Q_i(j)) P^{(i)}(s) \right) .
	\label{eq:problem}
\ee
By the Cauchy-Schwarz inequality, 
the first term in (\ref{eq:problem}) is
bounded in absolute value by,
\ben
 \sqrt{ \lam \sum_{j,s} g_A(s+j)^2 j Q(j) P(s)}
\sqrt{ \lam \sum_{j,s} jQ(j) P(s)
 \left(  \frac{\sum_i p_i P^{(i)}(s)}{\lam P(s)} - 1 \right)^2 },
\label{eq:CS1}
\een
and for the second term, simply bound $\| g_A \|_{\infty}$
by $H(\lambda,Q)$ using Lemma~\ref{lem:stein2},
deducing a bound in absolute value of 
$$ H(\lambda,Q) \sum_{i,j} p_i j | Q(j) - Q_i(j)|.$$

Combining these two bounds with the expression 
in (\ref{eq:problem}) and the original total-variation
inequality (\ref{eq:steinbasic}) completes the proof,
upon substituting the uniform sup-norm bound 
given in Lemma~\ref{lem:stein2}.
\end{proof}

Finally, recall from the discussion in the beginning
of this section that the scaled Fisher information 
$J_{\pi}$ satisfies a modified log-Sobolev inequality 
(\ref{eq:logS}), which gives a bound for the relative 
entropy in terms of the functional $J_\pi$.
For the information functionals 
$J_{\vc{Q},1}$ and $J_{Q,2}$ considered
in this work, we instead established
analogous bounds in terms of total variation.
However, the following partial result holds
for $J_{\vc{Q},1}$:

\begin{lem} \label{lem:logsob}
Let $Y = BX\sim P$, where $B\sim\Bern(p)$
and $X \sim Q$ on $\NN$. Then:
$$ D( P \| {\rm CPo}(p,Q) ) \leq J_{Q,1}(Y).$$ 
\end{lem}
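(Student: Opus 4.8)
The plan is to recognize this as the single-summand case of the data-processing argument already used for Theorem~\ref{thm:1}. Write $Y=BX$ with $B\sim\Bern(p)$ and $X\sim Q$, so that $P=C_Q\Bern(p)$, while by definition ${\rm CPo}(p,Q)=C_Q{\rm Po}(p)$. Both of these laws are obtained by passing $\Bern(p)$, respectively ${\rm Po}(p)$, through the \emph{same} compounding channel $R\mapsto C_QR$, which can be realized as a fixed deterministic function $f(\vc{X},W)$ of the input count $W\sim R$ and an independent i.i.d.\ sequence $\vc{X}=(X_i)\sim Q$. First I would apply the data-processing inequality and then the chain rule for relative entropy, exactly as in the proof of Theorem~\ref{thm:1} with $n=1$ (so that the cross term $D(Q\|Q)=0$ drops out), to obtain
$$
D\bigl(P\,\big\|\,{\rm CPo}(p,Q)\bigr)\;\le\;D\bigl(\Bern(p)\,\big\|\,{\rm Po}(p)\bigr).
$$

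Next I would bound the right-hand side. One clean route is the modified log-Sobolev inequality \eqref{eq:logS} applied to $X=B\sim\Bern(p)$, which has finite support $\{0,1\}$ and mean $p$, giving $D(\Bern(p)\|{\rm Po}(p))\le J_\pi(B)$; the scaled score \eqref{eq:scaledS} of $B$ equals $p/(1-p)$ at $0$ and $-1$ at $1$, so $J_\pi(B)=p\bigl[(1-p)(p/(1-p))^2+p\bigr]=p^2/(1-p)$. Equivalently — and this is what I would actually cite — the $n=1$ instance of \eqref{poi-bound} reads $D(\Bern(p)\|{\rm Po}(p))\le \tfrac1p\cdot\tfrac{p^3}{1-p}=\tfrac{p^2}{1-p}$. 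Finally I would invoke the computation \eqref{eq:j1oneterm} from the proof of Corollary~\ref{cor:j1simple}, which shows $J_{Q,1}(Y)=p^2/(1-p)$ exactly; chaining the inequalities then gives $D(P\|{\rm CPo}(p,Q))\le J_{Q,1}(Y)$.

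In short, the argument is nothing more than Theorem~\ref{thm:1} specialized to $n=1$, combined with \eqref{eq:j1oneterm}, and there is no genuine obstacle. The only point deserving a sentence of care is the data-processing step: one must phrase the compounding map as a function of $W$ together with a single fixed i.i.d.\ sequence $\vc{X}$ (not a $W$-dependent one), so that the chain-rule decomposition leaves only $D(\Bern(p)\|{\rm Po}(p))$; but this is handled verbatim in the proof of Theorem~\ref{thm:1} and needs nothing new here.
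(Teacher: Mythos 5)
Your argument is correct, and it reaches the lemma by a genuinely different route from the paper's. The paper proves the bound directly: since the ${\rm CPo}(p,Q)$ mass function equals $e^{-p}$ at $0$ and is at least $e^{-p}pQ(s)$ for $s\geq 1$ (the single-arrival term), one gets $D(C_Q\Bern(p)\,\|\,{\rm CPo}(p,Q))\leq (1-p)\log(1-p)+p$, and then $\log(1-p)\leq -p$ gives $(1-p)\log(1-p)+p\leq p^2\leq p^2/(1-p)=J_{Q,1}(Y)$. You instead observe that the lemma is Theorem~\ref{thm:1} specialized to $n=1$, which yields $D(P\,\|\,{\rm CPo}(p,Q))\leq \frac{1}{p}\cdot\frac{p^3}{1-p}=\frac{p^2}{1-p}$, combined with the identity $J_{Q,1}(Y)=p^2/(1-p)$ from \eqref{eq:j1oneterm}; every step you cite checks out (the data-processing reduction to $D(\Bern(p)\,\|\,{\rm Po}(p))$ is legitimate for $n=1$, your computation $J_\pi(B)=p^2/(1-p)$ is right, and \eqref{eq:logS} applies since $\Bern(p)$ has finite support). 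Under the surface the two arguments are close kin: the paper's lower bound $e^{-p}pQ(s)$ on the compound Poisson mass is exactly what the data-processing inequality tracks, and its intermediate quantity $(1-p)\log(1-p)+p$ is precisely $D(\Bern(p)\,\|\,{\rm Po}(p))$. What your version buys is economy --- no computation beyond what Theorem~\ref{thm:1} and Corollary~\ref{cor:j1simple} already contain; what the paper's version buys is self-containedness, avoiding any appeal to the data-processing inequality or to the log-Sobolev machinery behind \eqref{poi-bound}.
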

\begin{proof}
Recall from (\ref{eq:j1oneterm}) 
that $J_{Q,1}(Y) = \frac{p^2}{1-p}$.
Further, since 
the ${\rm CPo}(p,Q)$ mass function at $s$
is at least $e^{-p} p Q(s)$
for $s \geq 1$, 
we have,
$D( C_{Q}\Bern(p) \| {\rm CPo}(p,Q) ) 
\leq (1-p) \log(1-p) + p$, which yields the result.
\end{proof}


\section{Comparison with existing bounds}
\label{sec:num}
In this section, we compare the bounds obtained 
in our three main results, 
Theorems~\ref{thm:1},~\ref{thm:2} and
\ref{thm:3}, with  inequalities derived by other methods.
Throughout, $S_n=\Sumn Y_i = \Sumn B_iX_i$, where the $B_i$ and the~$Y_i$ are
independent sequences of independent random variables, with $B_i\sim\Bern(p_i)$ 
for some $p_i\in(0,1)$, and with $X_i\sim Q_i$ on $\NN$; we write $\l = \Sumn p_i$. 

There is a large body of literature developing
bounds on the distance between the distribution~$P_{S_n}$ of~$S_n$ and 
compound Poisson distributions;
see, e.g., \cite{Erh05} and the references therein,
or \cite[Section~2]{Roo03} for a concise review.

We begin with the case in which all the~$Q_i=Q$ are identical, when, in view
of a remark of Le~Cam \cite[bottom of p.187]{LeC65} 
and Michel \cite{Mic87},
bounds computed for the case 
$X_i=1$ a.s.\ for all~$i$ are also valid for any~$Q$.
One of the earliest results is the following
inequality of Le Cam \cite{LeC60}, 
building on earlier results by Khintchine and Doeblin,
\be
d_\stv(P_{S_n},\Cpo) \leq \Sumn p_i^2.
\label{lecam-bound}
\ee
Barbour and Hall~(1984) used Stein's method to improve the bound to
\be
d_\stv(P_{S_n},\Cpo) \leq \min\{1,\l^{-1}\}\Sumn p_i^2.
\label{BH-bound}
\ee
Roos \cite{Roo01} gives the asymptotically sharper bound
\be
\label{Roos-Q-1}
d_\stv(P_{S_n},\Cpo) \leq 
    \Bl \frac3{4e} + \frac{7\sth(3-2\sth)}{6(1-\sth)^2}\Br\,\th,
\ee
where $\th = \l^{-1}\Sumn p_i^2$.  
In this setting, the bound~(\ref{eq:ident2})
that was derived from Theorem~\ref{thm:1} yields
\begin{equation}
  d_\stv(P_{S_n} , \Cpo) 
     \leq \Bl \frac{1}{2 \l} \Sumn \frac{p_i^3}{1-p_i} \Br^{1/2}.
\label{eq:j1simple}
\end{equation}
The bounds (\ref{BH-bound}) -- (\ref{eq:j1simple})
are all derived using the observation
made by Le Cam and Michel, taking~$Q$ to be degenerate at~$1$.  
For the application of Theorem~\ref{thm:3}, however, 
the distribution~$Q$ must have support the whole of~$\NN$, 
so~$Q$ cannot be replaced by the point mass at~$1$ 
in the formula; the bound that results 
from Theorem~\ref{thm:3} can be expressed as
\begin{equation}
d_\stv( P_{S_n} , \Cpo )
   \leq H(\lam,Q) \Bl K(Q) \Sumn p_i^3 \Br^{1/2},\quad
     \mbox{with}\ K(Q) = \sum_y Q(y) y^2 \left( 
	\frac{ Q^{*2}(y)}{2 Q(y)} - 1 \right)^2 .
 \label{eq:j3more}
\end{equation}
Illustration of the effectiveness of these bounds with geometric~$Q$ and equal~$p_i$
is given in Section~\ref{sec:equal-Q}.

For non-equal~$Q_i$, the bounds are more complicated.  
We compare those given in
Theorems \ref{thm:2} and~\ref{thm:3} with three other bounds. 
The first is Le Cam's bound (\ref{lecam-bound}) that
still remains valid as stated in the case of non-equal $Q_i$.
The second, from
Stein's method, has the form
\be
\label{BCL-when-decr}
d_\stv(P_{S_n},\Cpo) \leq G(\lam,Q) \Sumn q_i^2 p_i^2,
\ee
see Barbour and Chryssaphinou \cite[eq.~(2.24)]{BCh01},
where $q_i$ is the mean of~$Q_i$ and $G(\lam,Q)$ is a Stein factor: 
if $jQ(j)$ is non-increasing, then
\ben
G(\lam,Q) = \min \bigg\{ 1, \,\, \delta \bigg[ \frac{\delta}{4} + 
\log^{+}\bigg( \frac{2}{\delta} \bigg) \bigg] \bigg\} ,
\een
where $\delta=[\lam\{Q(1)-2Q(2)\}]^{-1}\ge0$. 
The third is that of
Roos \cite{Roo03}, Theorem~2, which is in detail very complicated, but
correspondingly accurate.  A simplified version, 
valid if $jQ(j)$ is decreasing,
gives
\be
\label{Roos-Q-2}
d_\stv(P_{S_n},\Cpo) \leq 
     \frac{\a_2}{(1-2e\a_2)_+}, 
\ee
where
\[
   \a_2 = \Sumn g(2p_i)p_i^2\min\Bl \frac{q_i^2}{e\l},\frac{\n_i}{2^{3/2}\l},1 \Br,
\]
$\n_i = \sum_{y\ge1}Q_i(y)^2/Q(y)$ and $g(z) = 2z^{-2}e^z(e^{-z}-1+z)$.
We illustrate the effectiveness of these bounds in Section~\ref{sec:different-Q};
in our examples, Roos's bounds are much the best.

\subsection{Broad comparisons}
\label{sec:broad}

Because of their apparent complexity and different
forms, general comparisons between the bounds 
are not straightforward,
so we consider two particular cases below in 
Sections \ref{sec:equal-Q} and~\ref{sec:different-Q}. 
However, the following simple observation on approximating 
compound binomials by a compound Poisson gives a first 
indication of the strength of one of our bounds.

\newpage

\begin{prop}
For equal $p_i$ and equal~$Q_i$:
\begin{enumerate}
\item
If $n > (\sqrt{2}p(1-p))^{-1}$,
then the bound of Theorem~{\em \ref{thm:1}}
is stronger than Le Cam's bound {\em (\ref{lecam-bound})};
\item
If $p < 1/2$, 
then the bound of Theorem~{\em \ref{thm:1}}
is stronger than the bound {\em (\ref{BH-bound})};
\item 
If  $0.012 < p < 1/2$ and
$n>(\sqrt{2}p(1-p))^{-1}$ are satisfied,
then the bound of Theorem~{\em \ref{thm:1}}
is stronger than all three bounds
in {\em (\ref{lecam-bound})}, {\em (\ref{BH-bound})}
and {\em (\ref{Roos-Q-1})}.
\end{enumerate}
\end{prop}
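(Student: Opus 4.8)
The plan is to reduce everything to the single inequality from Theorem~\ref{thm:1}, namely that for equal $p_i = p$ and equal $Q_i = Q$ we have, via Pinsker (\ref{eq:ident2}),
\[
  d_\stv(P_{S_n},\Cpo)^2 \leq \frac{1}{2\lambda}\sum_{i=1}^n \frac{p_i^3}{1-p_i} = \frac{1}{2np}\cdot \frac{np^3}{1-p} = \frac{p^2}{2(1-p)},
\]
so that the Theorem~\ref{thm:1} bound is simply $B_1 := p/\sqrt{2(1-p)}$, independent of $n$. Note this uses the Le~Cam--Michel observation that, for identical $Q_i$, it suffices to treat $X_i=1$ a.s.; but here Theorem~\ref{thm:1}'s bound is already stated for general $Q$, so no such reduction is even needed. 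The three competing bounds, evaluated at equal $p_i=p$ (so $\theta = \lambda^{-1}\sum p_i^2 = p$ and $\sqrt\theta = \sqrt p$), become
\[
  B_{\mathrm{LC}} = np^2,\qquad B_{\mathrm{BH}} = \min\{1,(np)^{-1}\}\,np^2 = \min\{np^2, p\},\qquad
  B_{\mathrm{R}} = \Bl \tfrac{3}{4e} + \tfrac{7\sqrt p(3-2\sqrt p)}{6(1-\sqrt p)^2}\Br p,
\]
the last from (\ref{Roos-Q-1}). Each part is then a direct comparison of $B_1$ against one or more of these.

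For part~(1): $B_1 \le B_{\mathrm{LC}}$ reads $p/\sqrt{2(1-p)} \le np^2$, i.e. $n \ge (p\sqrt{2(1-p)})^{-1} = (\sqrt2\, p\sqrt{1-p})^{-1}$. The stated hypothesis is $n > (\sqrt2\, p(1-p))^{-1}$; since $\sqrt{1-p} \ge 1-p$ for $p\in(0,1)$, we have $(\sqrt2\,p(1-p))^{-1} \ge (\sqrt2\,p\sqrt{1-p})^{-1}$, so the hypothesis indeed implies the required inequality. For part~(2): when $p<1/2$ we use the $\min$ in $B_{\mathrm{BH}}$, and it suffices to show $B_1 \le p$, i.e. $1/\sqrt{2(1-p)} \le 1$, i.e. $1-p \ge 1/2$, which is exactly $p \le 1/2$. (Strictness for $p<1/2$ is clear.) Since $\min\{np^2,p\}\le p$ fails to be the operative branch only when $np^2 < p$, i.e. $np<1$, in which case $B_{\mathrm{BH}}=np^2$ could be smaller than $p$; one must check $B_1\le np^2$ there too — but $np<1$ together with $p<1/2$ forces... hmm, actually this needs a moment's care, so let me note it as the one slightly delicate point: we want $B_1 \le B_{\mathrm{BH}}$ for \emph{all} $n$, and when $np \ge 1$ the bound $B_{\mathrm{BH}}=p\ge B_1$ handles it, while when $np<1$ we are in the regime of part~(1)'s failure — but part~(2) as stated claims the comparison for all $n$, so presumably the intended reading restricts to the $np\ge1$ branch, or uses that for small $n$ the compound-binomial is trivially close; I would follow the paper's own convention here.

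For part~(3): combine. Part~(1)'s hypothesis $n > (\sqrt2\,p(1-p))^{-1}$ already gives $B_1 < B_{\mathrm{LC}} \le$ (nothing about BH), and part~(2)'s hypothesis $p<1/2$ gives $B_1 < B_{\mathrm{BH}}$; it remains to compare $B_1$ with Roos's bound $B_{\mathrm{R}}$. We need $p/\sqrt{2(1-p)} \le \Bl \tfrac{3}{4e} + \tfrac{7\sqrt p(3-2\sqrt p)}{6(1-\sqrt p)^2}\Br p$, i.e., dividing by $p$,
\[
  \frac{1}{\sqrt{2(1-p)}} \le \frac{3}{4e} + \frac{7\sqrt p\,(3-2\sqrt p)}{6(1-\sqrt p)^2}.
\]
The left side is increasing in $p$ from $1/\sqrt2 \approx 0.707$ at $p=0$ to $1$ at $p=1/2$; the right side is also increasing in $p$ (both summands are), starting at $3/(4e)\approx 0.2759$ at $p=0$. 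So the inequality \emph{fails} for small $p$ and holds for $p$ large enough; the crossover is where the two sides are equal, and one checks numerically that this occurs near $p \approx 0.012$, which is the stated threshold. Thus for $0.012 < p < 1/2$ we get $B_1 \le B_{\mathrm{R}}$ as well, and combining with parts~(1) and~(2) finishes the proof.

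The main obstacle is purely the part~(3) transcendental comparison: establishing rigorously that $1/\sqrt{2(1-p)} \le 3/(4e) + 7\sqrt p(3-2\sqrt p)/(6(1-\sqrt p)^2)$ precisely on $p>0.012$. I would handle this by substituting $u=\sqrt p$, clearing the denominator $(1-u)^2$, and reducing to checking that a fixed polynomial (times $\sqrt{2(1-u^2)}$, or after a further squaring) is nonnegative on $u \in (\sqrt{0.012}, 1/\sqrt2)$ — a finite interval — via monotonicity of the difference of the two sides (each is monotone, and their difference crosses zero once), pinning the unique root by interval arithmetic to confirm it lies at $p \approx 0.012$. The other two parts are the elementary estimate $\sqrt{1-p}\ge 1-p$ and the identity $1-p\ge 1/2 \iff p\le 1/2$, both immediate.
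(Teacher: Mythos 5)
Your approach coincides with the paper's own: the proof there is a single sentence, observing that the Theorem~\ref{thm:1} bound reduces to $p/\sqrt{2(1-p)}$, disposing of parts (1) and (2) by ``simple algebra,'' and checking part (3) numerically with $\theta=p$. Your computations for parts (1) and (3) carry this out correctly --- including the point that the stated hypothesis $n>(\sqrt2\,p(1-p))^{-1}$ is a (slightly stronger) sufficient condition for the exact requirement $n\ge(\sqrt2\,p\sqrt{1-p})^{-1}$ against Le~Cam, and the numerical location of the crossover with Roos's bound just below $p=0.012$.

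The hesitation you record in part (2) is well founded: it points to a defect of the proposition as stated rather than a gap in your argument. The paper's ``simple algebra'' implicitly compares $p/\sqrt{2(1-p)}$ with $\lambda^{-1}\sum_i p_i^2=p$, i.e.\ it silently assumes the $\lambda^{-1}$ branch of the minimum in (\ref{BH-bound}) is the active one. When $\lambda=np<1$, the bound (\ref{BH-bound}) equals $np^2$ and the claim can fail: for $n=1$, $p=0.4$ one has $p/\sqrt{2(1-p)}\approx 0.365$ while $\min\{1,\lambda^{-1}\}np^2=0.16$. So part (2) is only valid under an additional hypothesis such as $\lambda\ge1$ (more precisely, $np\ge 1/\sqrt{2(1-p)}$), and you should say so rather than defer to ``the paper's convention.'' Note, however, that part (3) is unaffected by this: its hypothesis $n>(\sqrt2\,p(1-p))^{-1}$ already yields $p/\sqrt{2(1-p)}<np^2$, which combined with $p<1/2$ (giving $p/\sqrt{2(1-p)}<p$) dominates \emph{both} branches of the minimum in (\ref{BH-bound}); so the combined hypotheses of part (3) close the hole that part (2) leaves open.
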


\begin{proof}
The first two observations follow by simple algebra,
upon noting that the bound of Theorem~\ref{thm:1} 
in this case reduces to $\frac{p}{\sqrt{2(1-p)}}$;
the third is shown numerically, noting that here $\th=p$.
\end{proof}

\medskip
One can also examine the rate of convergence of 
the total variation distance between the 
distribution $P_{S_n}$ and the 
corresponding compound Poisson distribution, under simple
asymptotic schemes.  We think of situations in which the
$p_i$ and~$Q_i$ are not necessarily equal, but are all in
some reasonable sense comparable with one another; we shall
also suppose that $jQ(j)$ is more or less a fixed and decreasing
sequence.  Two ways in which~$p$ varies with~$n$ are considered:
\begin{enumerate}
\item[]{\bf Regime I. } $p=\lambda/n$ for fixed $\lambda$,
and $n\to\infty$;
\item[]{\bf Regime II. } 
$p=\sqrt{\frac{\mu}{n}}$, so that 
$\lam=\sqrt{\mu n}\ra\infty$ as $n\to\infty$.
\end{enumerate}

Under these conditions, the Stein factors
$H(\lambda,Q)$ are of the same order as $1/\sqrt{np}$.
Table~\ref{tab:compare-cpa} compares the
asymptotic performance of the various bounds above. 

\begin{table}[ht]
\begin{center}
\begin{tabular}{|l|c|c|c|}
\hline
{\bf Bound} &  $d_{\rm TV}(P_{S_n} , {\rm CPo}(\lam,Q))$ to leading order
& {\bf I} & {\bf II} \\
\hline\hline
Le Cam (\ref{lecam-bound}) & $np^2$ & $n^{-1}$ & 1 \\
\hline
Roos (\ref{Roos-Q-2}) & $ np^2 \min(1,1/(np))$ &
$n^{-1}$ & $ n^{-1/2}$ \\ 
\hline
Stein's method (\ref{BCL-when-decr}) & 
$n p^2 \min(1,\log(np)/np)$ & $n^{-1}$ & $ n^{-1/2} \log n$ \\
\hline
Theorem~\ref{thm:2} & 
$p$ & $1$ & $ n^{1/4}$  \\
\hline
Theorem~\ref{thm:3}
(\ref{eq:j3more}) & 
$p$ & $n^{-1}$ & $ n^{-1/2}$  \\
\hline
\end{tabular}
\end{center}
\caption{Comparison of the first-order asymptotic performance 
of the bounds in 
(\ref{lecam-bound}), (\ref{BCL-when-decr}) and~(\ref{Roos-Q-2}),
with those of Theorems \ref{thm:2} and~\ref{thm:3} 
for comparable but non-equal~$Q_i$,
in the two limiting regimes $p\asymp 1/n$ and $p \asymp 1/\sqrt{n}$.}
\label{tab:compare-cpa}
\end{table}

The poor behaviour of the bound in Theorem~\ref{thm:2}
shown above occurs because, for large values of $\lambda$,
the quantity $D(\vc{Q})$ behaves much 
like $\lambda$, unless the~$Q_i$ are identical 
or near-identical.

\subsection{Example. Compound binomial with equal geometrics}
\label{sec:equal-Q}

We now examine the finite-$n$ behavior of 
the approximation bounds (\ref{lecam-bound}) -- (\ref{Roos-Q-1})
in the particular case of equal~$p_i$ and equal~$Q_i$,
when $Q_i$ is geometric with parameter
$\alpha>0$, $Q(j)=(1-\alpha)\alpha^{j-1}$,
$j\geq 1$. 

If $\alpha<\half$, then 
$\{j Q(j)\}$ is decreasing and, with
$\delta=[\lam (1-3\alpha+2\alpha^2) ]^{-1}$,
the Stein factor in (\ref{eq:j3more}) becomes
\ben 
H(\lam, Q)= \min\{1,\sqrt{\delta}(2-\sqrt{\delta})\}. 
\een
The resulting bounds are plotted 
in Figures~\ref{fig:2a}~--~\ref{fig:2c}. 

\begin{figure}[!htbp]
\centering
\includegraphics*[angle=0,width=3.5in]{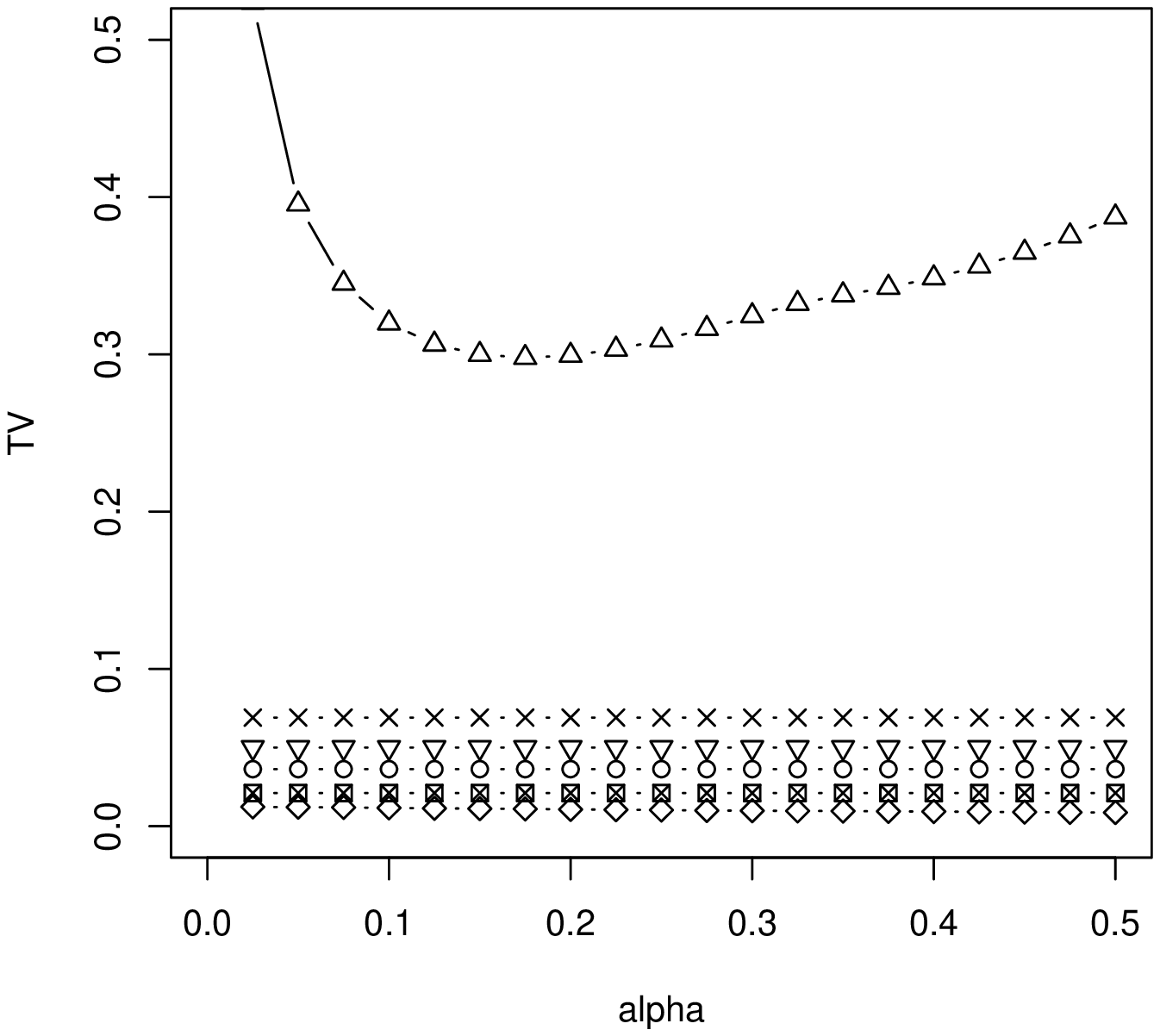}
\caption{
\label{fig:2a} 
Bounds on the total variation distance 
$d_{\rm TV}(C_Q{\rm Bin}(p,Q),\Cpo)$ for $Q\sim{\rm Geom}(\alpha)$, 
plotted against
the parameter $\alpha$, with $n = 100$ and $\lam = 5$ fixed.
The values of the bound in~(\ref{eq:j1simple})
are plotted as $\circ$; those in~(\ref{eq:j3more}) as
$\triangle$; 
those of the Stein's method bound in~(\ref{BH-bound})
as $\bigtriangledown$;
and Roos' bounds in~(\ref{Roos-Q-1}) 
and~(\ref{Roos-Q-2}) as~$\times$ and $\boxtimes$, respectively.
The true total variation distances, computed numerically in each
case, are plotted as $\diamond$.}
\end{figure}

\begin{figure}[!htbp]
\centering
\includegraphics*[angle=0,width=3.5in]{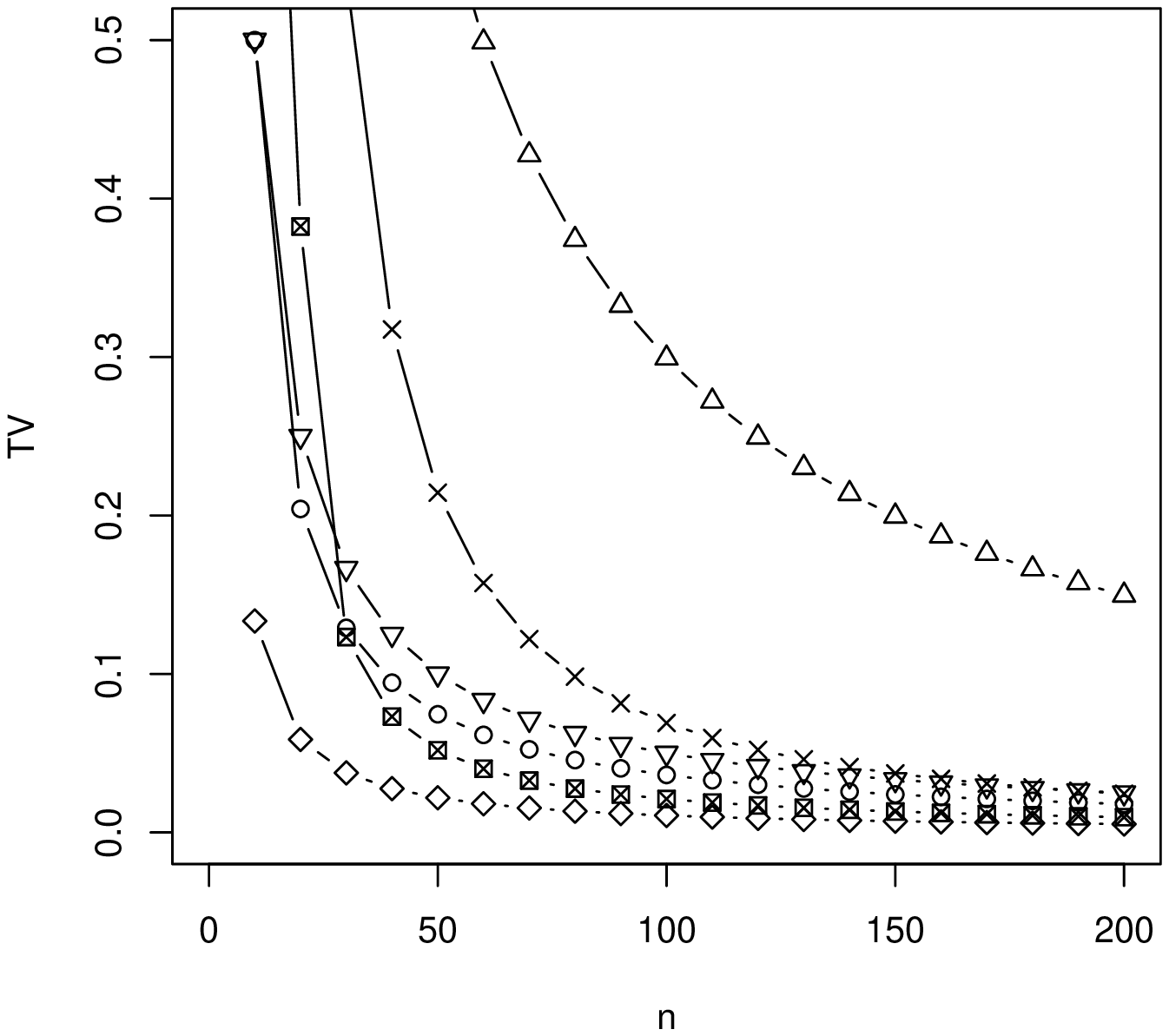}
\caption{
\label{fig:2b} 
Bounds on the total variation distance 
$d_{\rm TV}(C_Q{\rm Bin}(p,Q),\Cpo)$ for $Q\sim{\rm Geom}(\alpha)$
as in Figure~\ref{fig:2a}, here plotted against 
the parameter $\alpha$, with $n = 100$ and $\lam = 5$ fixed.}
\end{figure}

\begin{figure}[!htbp]
\centering
\includegraphics*[angle=0,width=3.5in]{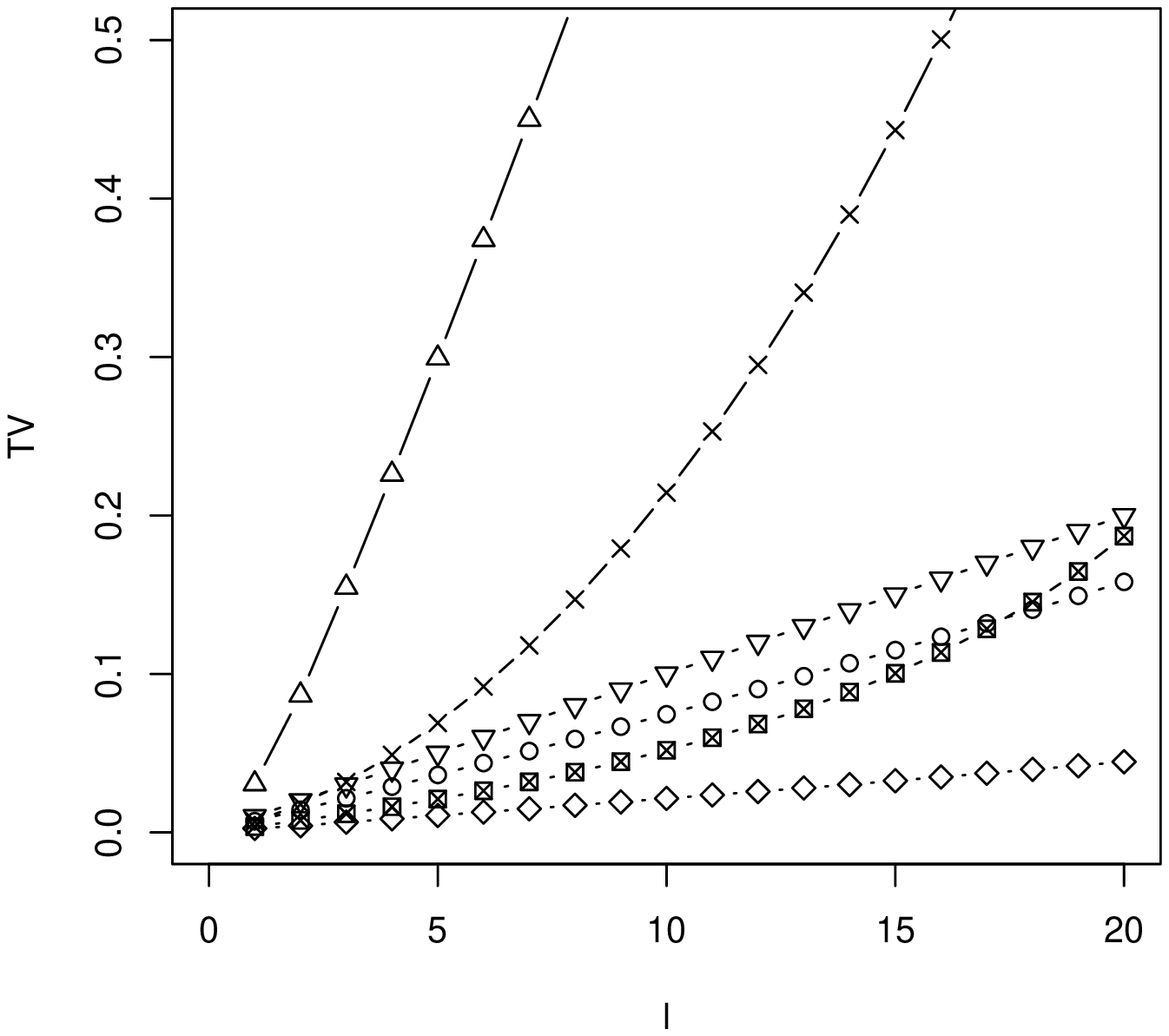}
\caption{
\label{fig:2c} 
Bounds on the total variation distance 
$d_{\rm TV}(C_Q{\rm Bin}(p,Q),\Cpo)$ for $Q\sim{\rm Geom}(\alpha)$
as in Figure~\ref{fig:2a}, here plotted against
the parameter $\lam$, with $\alpha=0.2$ and $n=100$ fixed.}
\end{figure}

\subsection{Example. Sums with unequal geometrics}
\label{sec:different-Q}

Here, we consider finite-$n$ behavior of 
the approximation bounds (\ref{lecam-bound}), (\ref{BCL-when-decr}) and
(\ref{Roos-Q-2}) in the particular
case when the distributions $Q_i$ are geometric with parameters
$\alpha_i>0$.
The resulting bounds are plotted 
in Figures~\ref{fig:3a} and~\ref{fig:3b}.

In this case, it is clear that the best bounds by a considerable margin are
those of Roos~\cite{Roo03} given in~(\ref{Roos-Q-2}).

\begin{figure}[!htbp]
\centering
\includegraphics[angle=0,width=3.5in]{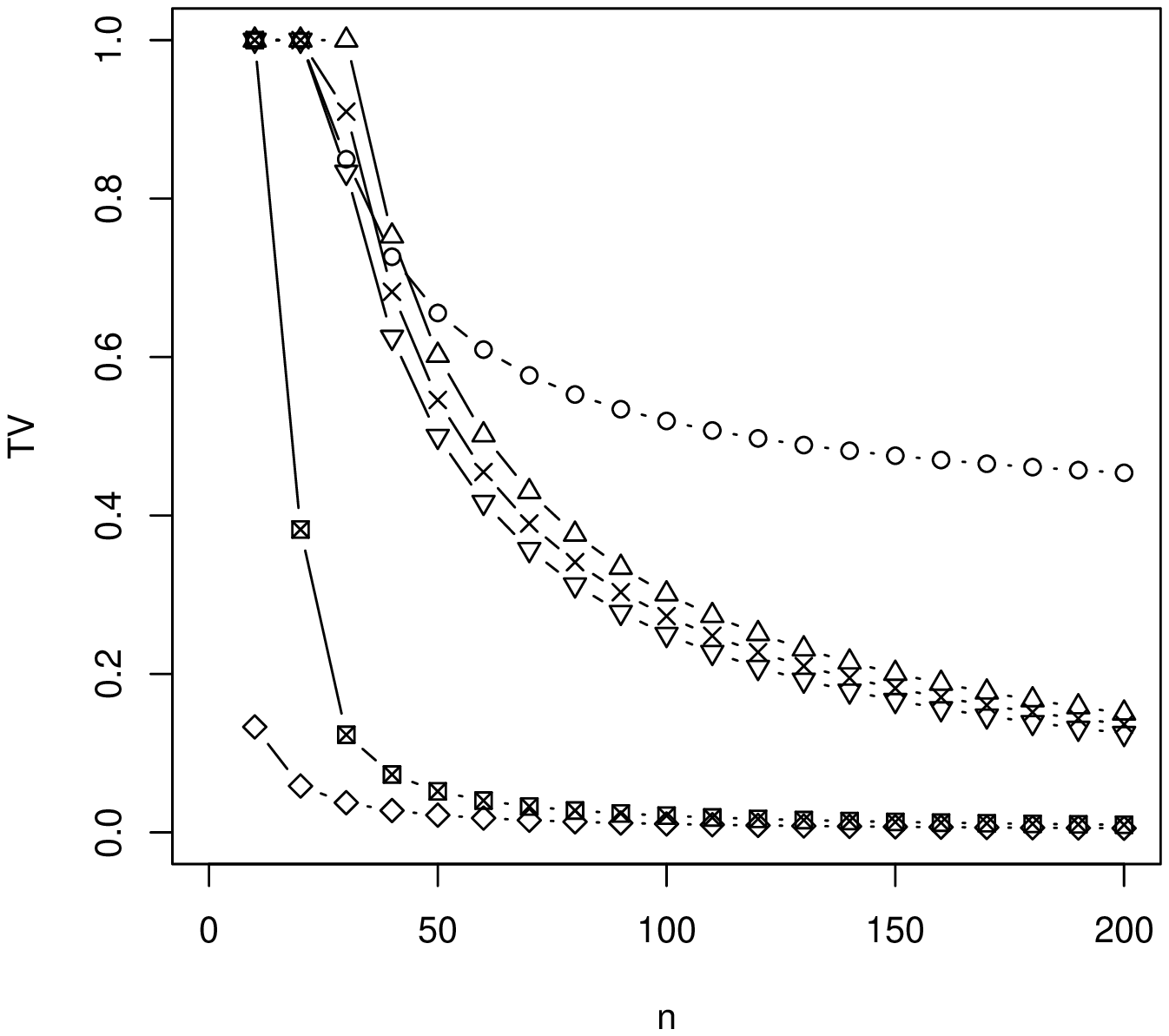}
\caption{
\label{fig:3a} 
Bounds on the total variation distance 
$d_{\rm TV}(P_{S_n},\Cpo)$ for $Q_i \sim{\rm Geom}(\alpha_i)$,
where $\alpha_i$ are uniformly spread between $0.15$ and $0.25$, 
$n$ varies, and $p$ is as in regime I, $p=5/n$.
Again, bounds based on $J_{\vc{Q},1}$ 
are plotted as $\circ$; those based on $J_{Q,2}$ as
$\triangle$; Le Cam's bound in~(\ref{lecam-bound}) as~$\bigtriangledown$;
the Stein's method bound in~(\ref{BCL-when-decr}) as~$\times$,
and Roos' bound from Theorem 2 of \cite{Roo03} as~$\boxtimes$. 
The true total variation distances, computed numerically in each
case, are plotted as $\diamond$.
}
\end{figure}

\begin{figure}[!htbp]
\centering
\includegraphics[angle=0,width=3.5in]{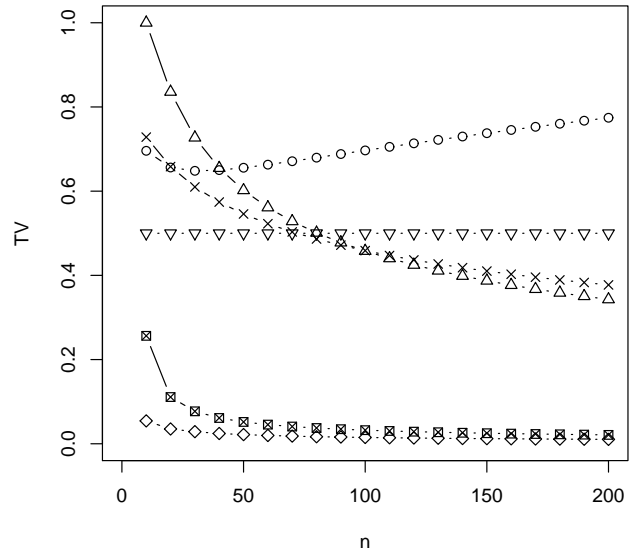}
\caption{
\label{fig:3b} 
Bounds on the total variation distance 
$d_{\rm TV}(P_{S_n},\Cpo)$ for $Q_i \sim{\rm Geom}(\alpha_i)$
as in Figure~\ref{fig:3a},
where $\alpha_i$ are uniformly spread between $0.15$ and $0.25$, 
$n$ varies, and $p$ is as in Regime II, $p=\sqrt{0.5/n}$.
}
\end{figure}



\newpage

\end{document}